\theoremstyle{plain}
\newtheorem{thm}{Theorem}[section]
\newtheorem{prop}[thm]{Proposition}
\newtheorem{corollary}[thm]{Corollary}
\newtheorem{lemma}[thm]{Lemma}
\theoremstyle{definition}
\newtheorem{remark}[thm]{Remark}
\numberwithin{equation}{section}
\newcommand{\rd}{\mathbf{R}^d}
\newcommand\R{\mathbf{R}}
\newcommand\E{\mathbf{E}}
\newcommand\Pp{\mathbf{P}}
\renewcommand{\le}{\leqslant}
\renewcommand{\leq}{\leqslant}
\renewcommand{\ge}{\geqslant}
\renewcommand{\geq}{\geqslant}
\newcommand{\1}{1}
\newcommand{\ds} {\, \mathrm{d}s}
\newcommand{\dt} {\, \mathrm{d}t}
\newcommand{\dx} {\, \mathrm{d}x}
\newcommand{\dy} {\, \mathrm{d}y}
\newcommand{\dr} {\, \mathrm{d}r}
\newcommand{\dist} {\mathrm{dist}}
\def\proofof{\noindent{\emph{Proof of}} }
\begin{document}\allowdisplaybreaks

\title{\Large\bfseries Large solutions to semilinear equations for subordinate Laplacians in $C^{1,1}$ bounded open sets}

\author{ Indranil Chowdhury\footnote{ Research supported by the Croatian Science Foundation under the project 4197 and in part under the project 2277 }
    \and
    Zoran Vondra\v cek$^\ast$
    \and
    Vanja Wagner$^\ast$
}

\date{}

\maketitle

 \begin{abstract}\noindent
  We study the existence of a large solution to a semilinear problem in a bounded open $C^{1,1}$ set for a class of nonlocal operators obtained by an appropriate subordination of the Laplacian. These operators are classical generalisations of the fractional Laplacian. The existence result is shown under a nonlocal version of the Keller-Osserman condition, stated in terms of the subordinator and the source term $f$. 

      \medskip
      \noindent
      \emph{2020 Mathematics Subject Classification:} Primary 35A01; Secondary 35R11,\\ 31C05, 35J61, 45K05 
      
      \medskip\noindent
      \emph{Keywords:} Semilinear differential equations, nonlocal operators, large solutions, Keller-Osserman condition
  \end{abstract}

\section{Introduction}\label{sec-INTRO}

Semilinear elliptic equations of the form
\[
\Delta u = f(u) \quad \text{in } \Omega,
\]
where \(\Omega\) is a domain in \(\mathbf{R}^d\), and \(f\) is a nonlinear function, have been the subject of intensive investigation for several decades, see, e.g., \cite{MV} for a comprehensive overview of results. One particularly interesting class of solutions to these equations is the so-called large solutions — those that blow up at the boundary of the domain. These solutions do not have a finite $M$-boundary trace (see \cite[Definition 1.3.6.]{MV}), and therefore cannot be dominated by nonnegative classical harmonic functions on $\Omega$. The existence, uniqueness, and qualitative behavior of such solutions have been thoroughly studied for local operators, and the Laplacian in particular, and necessary and sufficient conditions for their existence are well understood. A foundational result in this context is the Keller-Osserman condition introduced in \cite{keller} and \cite{osserman}, which provides a criterion for the existence of large solutions based on the growth of the nonlinearity \(f\).

In contrast, the analogous theory for equations involving nonlocal operators, even the simplest case of the fractional Laplacian $-(-\Delta)^{\alpha/2}$, $\alpha\in(0,2)$, has only recently begun to develop, e.g. \cite{CFQ}, \cite{FQ}, \cite{benchrouda}, \cite{abatangelo1}, \cite{abatangelo2}, \cite{BJK}, \cite{BVW}, \cite{klimsiak}, \cite{klimsiak25}. Semilinear equations involving fractional Laplacians and other nonlocal operators exhibit a rich and sometimes unexpected behavior that differs significantly from the local case. For example, nonlocal effects allow for boundary blow-up even in linear equations, a phenomenon not possible in classical Laplacian problems. Moreover,
the nonlocal nature of the operators leads to prescribing the analogue of the classical Dirichlet boundary conditions in terms of the exterior data on (parts of) $\Omega^c$, and boundary data given via an appropriate potential-theoretical trace operator, see \cite{BJK}, \cite{Bio} and \cite{BVW}.

The current work focuses on a class of semilinear problems involving a family of nonlocal operators defined via subordination of the Laplacian by a complete Bernstein function $\phi$, with the general form \( L = -\phi(-\Delta) \). These operators can be considered as generators of a special class of pure-jump Lévy processes, called subordinate Brownian motions, and have been studied extensively from both probabilistic and analytical perspectives. They are typically expressed as singular integral operators of the form
\[
Lu(x) = \text{P.V.} \int_{\mathbf{R}^d} (u(y) - u(x))j(|y - x|)\, dy,
\]
where the kernel \(j\) represents the jump intensity of the subordinated process. Under certain weak scaling assumptions on the subordinator $\phi$, the operator exhibits robust analytic properties and potential theory analogous to the classical fractional Laplacian case, i.e. for \( \phi(t) = t^{\alpha/2} \), $\alpha\in(0,2)$.
Our analysis relies on recent advances in potential theory for subordinate Brownian motions and the representation of the corresponding harmonic functions that allows for a precise formulation of the blow-up condition in terms of the renewal function corresponding to the conjugate subordinator associated with $\phi$. Among these, and recent advances in analytical properties of nonlocal operators and {(semi-)linear} equations, we highlight the works \cite{BKK, BJK, Bio, BVW, KKLL, GKL, BJ} and the references therein.

We consider solutions to the equation
\begin{equation}\label{e:semilinear}
-Lu = - f(u) \quad \text{in } \Omega,
\end{equation}
for a bounded open \( C^{1,1} \) set \( \Omega \subset \mathbf{R}^d \), \( d \geq 2 \), under a regime of strong boundary singularities, commonly referred to as large solutions. These are the solutions that exhibit blow-up at the boundary with a growth rate that surpasses that of any nonnegative $L$-harmonic function. In order to construct a large solution in the distributional sense, we rely on a generalization of the classical Keller-Osserman condition, adapted to the nonlocal context of this paper. In the classical setting of the Laplacian, the classical Keller-Osserman condition 
\begin{equation}\label{eq:KO_classical}
    \int_1^\infty \frac 1{\sqrt{F(t)}}\, dt<\infty,
\end{equation}
where $F$ is the primitive function of an increasing function $f$, is the necessary and sufficient condition for the existence of a solution to the semilinear equation
\begin{equation}\label{eq:local}
    \begin{cases}
   \ \ \  -\Delta u = -f(u), &\text{in }\Omega,\\
    \lim\limits_{\Omega\ni x\to\partial\Omega} u(x)=\infty.&   
    \end{cases}
\end{equation}

As mentioned above, the analogue of this equation in the nonlocal setting requires prescribing complement data, as well as an appropriate analogue of the boundary condition. The first result for the fractional Laplace operator in this direction is by Felmer and Quaas (\cite{FQ}), considered on a $C^2$ domain $\Omega$, of the form
\begin{equation}\label{eq:fractional}
    \begin{cases}
    \ \ \  (-\Delta)^{\alpha/2} u = -f(u), &\text{in }\Omega,\\
    \ \ \  u=g, & \text{in }\Omega^c\\
     \lim\limits_{\Omega\ni x\to\partial\Omega} u(x)=\infty.  & 
    \end{cases}
\end{equation}
for $f(t)=t^p$, $p>1$. Here they constructed boundary blow-up solutions in the viscosity sense by considering complement data $g$ with singularities at the boundary. Note that the problem \eqref{eq:fractional} is not the true analog of the boundary blow-up problem considered in the local case, cf. \eqref{eq:local}. Indeed, there exist $\alpha$-harmonic functions on $\Omega$ that explode at the boundary, with the rate at most $\delta_\Omega^{\frac\alpha2-1}$, where $\delta_\Omega(x)$ is the distance from the point $x\in \Omega$ to $\partial \Omega$. This means that, in the case of the fractional Laplace operator, solutions to \eqref{e:semilinear} that are dominated by corresponding harmonic functions, so-called \emph{moderate} solutions, can exhibit the same boundary blow-up. We are primarily interested in studying the true nonlocal analogue of \emph{large} solutions to \eqref{e:semilinear}, which in the fractional setting has a singularity at the boundary of higher order than $\delta_\Omega^{\frac\alpha2-1}$. Moreover, the corresponding boundary blow up should be the result of the interaction between the operator and the source term, and not with the complement data -- this is in similar spirit as the original results by Keller and Osserman in the local case. The resulting equation is then of the form 

\begin{equation}\label{eq:blow_up_fractional}
    \begin{cases}
       \ \ \ (-\Delta)^{\alpha/2} u = -f(u), &\text{in }\Omega,\\
   \ \ \   u=0,\qquad &\text{in }\Omega^c,\\
   \lim\limits_{\Omega\ni x\to \partial \Omega}\delta_\Omega^{1-\alpha/2}(x)u(x)=\infty. &
    \end{cases}
\end{equation}
In the follow-up work \cite{CFQ}, the authors show that problem \eqref{eq:blow_up_fractional} admits a large solution when $f(t)=t^p$ for the range $1 + \alpha\le  p\le 1 -\frac{\alpha}{\tau_0(\alpha)}$ where $\tau_0(\alpha) \in\langle- 1, 0]$. Moreover, they obtain the exact blow-up rate of order $\delta_\Omega(x)^{\frac{\alpha}{1-p}}$. Following their result, the authors in \cite{benchrouda} adopt a different approach based on analytic tools from potential theory for the symmetric $\alpha$-stable process, formulating the semilinear problem in a corresponding weak dual and distributional sense. In the case where $\Omega$ is a ball and $f(t)=t^p$, they construct an explicit nonnegative continuous solution to \eqref{eq:blow_up_fractional}, exhibiting the same blow-up rate as in \cite{CFQ}, for the range of parameters $1 + \alpha < p < \frac{2+\alpha}{2-\alpha}$. In addition, a nonexistence result is shown for $0 < p < 1 + \frac{\alpha}{2}$. The former range implies that the classical Keller-Osserman condition cannot be fully extended to the nonlocal setting, since \eqref{eq:KO_classical} is equivalent to $p>1$. Independently of \cite{benchrouda}, the question of the existence of solutions to \eqref{eq:blow_up_fractional} for more general power-like nonlinearities $f$ has been studied in \cite{abatangelo2}, under the term \emph{very large} solutions. Additionally, a somewhat incomplete fractional version of the Keller-Osserman condition is obtained, as a sufficient condition for the solvability of \eqref{eq:blow_up_fractional}. This condition is the key ingredient in obtaining a supersolution to \eqref{eq:blow_up_fractional}, which allows for a construction of a solution as an increasing limit of appropriately chosen moderate solutions to the semilinear problem \eqref{e:semilinear}. A slight drawback of this approximation approach is the lack of an exact blow-up rate of the obtained large solution.

In this paper, we focus on a class of semilinear problems involving more general subordinate Laplacians in bounded open \(C^{1,1}\) sets \(\Omega \subset \mathbf{R}^d\), \(d \geq 2\). Specifically, we study the problem
\begin{equation}\label{eq:problem}
\begin{cases}
\ -Lu = -f(u) & \text{in } \Omega, \\
\ \ \ u = 0 & \text{in } \mathbf{R}^d \setminus \Omega, \\
\lim\limits_{\Omega\ni x\to \partial \Omega}V^*(\delta_\Omega(x))u(x)=\infty.  & 
\end{cases}    
\end{equation}
where $L=-\phi(-\Delta)$ and \(V^*\) is a renewal function associated with the ladder-height process of a conjugate subordinator with the Laplace exponent $\phi^*$, for details see \eqref{eq:phistar} and \eqref{eq:Vstar}. We discuss how the blow-up condition in \eqref{eq:problem} reflects a beyond-moderate explosion at the boundary. The key novelty of this paper lies in extending the existence of large solutions to this more general nonlocal setting, under a nonlocal analogue of the Keller-Osserman condition that serves as a sufficient condition for the existence of such solutions. We also present the appropriate notion of the corresponding Keller-Osserman type condition, which, unlike \eqref{eq:KO_classical}, cannot be stated purely as an integrability condition, see \eqref{eq:KO_refined}. 

This paper is organized as follows. In Section 2, we provide the necessary background on the class of subordinators $\phi$ satisfying a certain scaling condition \eqref{eq:H} and the operator $L$, as well as basic notions from the corresponding potential theory, such as the renewal function $V$. In the same way as in \cite{benchrouda}, the probabilistic aspects of these notions and techniques are presented only for better interpretation. Section 3 is devoted to the formulation of the Keller-Osserman type condition in the nonlocal setting and the construction of a suitable supersolution, based on \cite{abatangelo2}. In Section 4, we prove the existence of large solutions using a monotone approximation scheme. By connecting the notions of distributional and weak dual super- and sub-solutions, we obtain a very general distributional version of the comparison principle. Section 5 discusses connections to Kato-type inequalities and explores some consequences of this result, as well as important open problems.

Finally, a brief comment on the less standard notation in the paper. For positive functions $f$ and $g$, we write $f\lesssim g$ ($f\gtrsim g$, $f\asymp g$) if there exists a constant $c>0$ such that $f\le c\, g$ ($f\ge c \,g$, $c^{-1} g\le f \le c\,g$).

\section{Preliminaries} \label{sec:prelim}
Let $\phi:[0,\infty)\to[0,\infty)$ be a complete Bernstein function with zero drift. This means that the function $\phi$ can be represented as
\[
\phi(\lambda)=\int_0^\infty \left(1-e^{-\lambda s}\right)\nu(s)\ds,
\]
where the corresponding L\' evy density $\nu$ is a completely monotone function. We refer to \cite{bernstein} for a comprehensive overview of the theory. In probability theory, Bernstein functions are associated with subordinators, i.e.~nonnegative L\' evy processes, via their Laplace exponent. Let $S=(S_t:t\ge0)$ be the subordinator with the Laplace exponent $\phi$,~i.e.
\[
\E[e^{-\lambda S_t}]=e^{-t\phi(\lambda)}.
\]
We are interested in operators associated with generators of subordinate Brownian motions in $\R^d$, $d\ge 2$. Let $B=(B_t:t\ge 0)$ be a Brownian motion on $\R^d$ independent of $S$ and let $X=(X_t:t\ge 0)$ be the corresponding subordinate Brownian motion, $X_t=B_{S_t}$. The process $X$ is a L\' evy process with the characteristic exponent $\xi\mapsto\phi(|\xi|^2)$, $\xi\in\R^d$, and the corresponding infinitesimal generator $L$ of the form
\begin{equation}\label{eq:L1}
Lu(x)=\text{P.V.}\int_{\R^d}(u(y)-u(x))j(|y-x|)\dy, \quad u\in C_c^2(\R^d),
\end{equation}
\cite[Theorem 31.5]{sato}. Here $j:(0,\infty)\to(0,\infty)$ is the radial L\' evy density of $X$ given by
\begin{equation}\label{eq:kernel}
j(r)=\int_0^\infty \frac{1}{(4\pi s)^{d/2}}e^{\frac{-r^2}{4s}}\nu(s)\ds,
\end{equation}
which is continuous and decreasing. Throughout the paper, we assume that $\phi$ satisfies the following scaling property at infinity,
\begin{equation}\label{eq:H}
   a_1\lambda^{\delta_1}\leq\frac{\phi(\lambda t)}{\phi(t)}\leq a_2\lambda^{\delta_2},\quad \lambda\geq 1, t\geq 1, 
\end{equation}
for some constants $a_1,a_2>0$ and $0<\delta_1\leq \delta_2<1$. As a simple consequence of \eqref{eq:H} we get that
 \begin{equation}\label{eq:phi1}
 \phi'(t)\asymp \frac{\phi(t)}{t},\ t\geq 1,
 \end{equation}
and by \cite[Theorem 2.3]{ubHp},
\begin{equation}\label{eq:j}
j(r)\asymp \frac{\phi(r^{-2})}{r^d},\ r\leq 1.
\end{equation}
For a bounded open set $U\subset \R^d$, define the first exit time from $U$ of the process $X$ as $\tau_U=\inf\{t\ge 0:\, X_t\not\in U\}$ and denote by $X^U$ the corresponding killed process. The transition densities $p^U$ of $X^U$ and $p$ of $X$ are related by the so-called Hunt formula
\begin{align*}\label{eq:p_D Hunt}
		p^U(t,x,y)=p(t,x,y)-\E_x[p(t-\tau_U,X_{\tau_U},y)\1_{\{t\ge\tau_U\}}], \qquad t>0, \, x,y\in\R^d.
	\end{align*}
Denote by $G_U(x,y)$ the Green function of $X^U$, defined by
\[
G_U(x,y)=\int_0^\infty p^U(t,x,y)\, dt, \qquad x,y\in\R^d,
\]
and by $G_Uf$ the corresponding Green potential of a function $f$,
\[
G_Uf(x)=\int_U f(y)G_U(x,y)\, dy,\qquad x\in\R^d.
\]
For $x \in U$, the probability measure $\omega^x_U(dz) := \Pp_x(X_{\tau_U}\in dz)$ is called the harmonic measure of $U$ with respect to the process $X$. For a function $u$ on $U^c$ let 
\begin{equation}\label{eq:harmonic_potential}
P_Uu(x) := \E_x[u(X_{\tau_U})] =\int_{\R^d}u(y)\omega^x_U(dy),    
\end{equation}
whenever the integral makes sense. The Poisson kernel of $X^U$, defined by
\[
P_U(x,z)=\int_U G_U(x,y)j(|y-z|)\, dy,\qquad x\in U,\, z\in \overline{U}^c,
\]
is the density of the restriction of the harmonic measure to $\overline{U}^c$. Furthermore, if $\partial U$ is regular enough, e.g. if $U$ is a Lipschitz set, then
$\omega_U^x(dz)=P_U(x,z)dz$ on ${U}^c$.
Fix $x_0\in U$. For $x\in U$ and $z\in\partial U$ by \cite[
Lemma 3.4, Theorem 1.1]{KSV18a} there exists the limit
\[
M_U(x, z) := \lim_{U\ni y\to z}\frac{G_U(x,y)}{G_U(x_0,y)},
\]
which is called the Martin kernel of $U$ with respect to $X$. The Martin potential $M_Uh$ of a continuous function $h$ on $\partial U$ is defined accordingly. The aforementioned potentials play a crucial role in defining the weak dual solutions to (semi-)linear problems
\begin{equation}\label{e:semilinear_moderate}
	\begin{array}{rcll}
		-Lu&=& -f(u)& \quad \text{in } \Omega\\
		u&=&g& \quad \text{in }\overline{\Omega}^c\\
		W_\Omega u&=&h&\quad \text{on }\partial \Omega,
	\end{array}
\end{equation}
which form the class of moderate solutions to \eqref{e:semilinear}. Here, $W_\Omega$ is the appropriate boundary trace operator defined for general open sets, for details we refer to \cite{BVW} and \cite{Bio}. A function $u$ is a weak dual solutions to \eqref{e:semilinear_moderate} if it satisfies the relation
\[
u(x)=-G_\Omega f(u)(x)+P_\Omega g(x)+M_\Omega h(x), \qquad x\in\R^d.
\]
For details on this class of problems and the boundary operator $W_\Omega$, we refer to \cite{BJK}, \cite{Bio}, and \cite{BVW}.

In this paper, the boundary conditions and the beyond-moderate boundary blow-up of solutions will be determined in terms of the special renewal function. When $\Omega$ is a $C^{1,1}$ open set, the connection between the boundary operator $W_\Omega$ and the pointwise boundary condition in terms of the renewal function has been explored in detail in \cite[Subsection 4.7, (4.18), (4.19), (4.25)]{BVW}. Here we give the definition of the renewal function and a brief overview of its properties, for details we refer to \cite[Section VI]{bertoin} and \cite{KSV12b}. Let $Z=(Z_t)_{t\ge 0}$ be a one-dimensional subordinate Brownian motion with the characteristic exponent $\phi(\theta^2)$, $\theta\in \R$. The renewal function $V$ of the ascending ladder height process $H=(H_t)_{t\ge 0}$ of $Z$, is defined as
$$
V(t):=\int_0^{\infty}\mathbf P(H_s\le t)\, ds, \quad t\in \R.
$$
For the definition and properties of the ladder height process $H$, we refer to \cite[Section VI.1]{bertoin}. From the definition of the renewal function $V$ it easily follows that $V(t)=0$ for $t<0$, $V(0)=0$, $V(\infty)=\infty$, and $V$ is strictly increasing. In the case of the isotropic $\alpha$-stable process we have that $V(t)=t^{\alpha/2}$. In the general case, the function $V$ is not known explicitly, but rather determined by its Laplace transform. Nevertheless, under \eqref{eq:H}, it is known that $V$ is a $C^2$ function with the following properties:
\begin{align}
V(t) &\asymp \Phi(t):=\phi(t^{ -2})^{-1/2}\, ,\quad 0<t\leq 1,\label{e:V-phi}\\
V'(t)&\lesssim\frac{V(t)}{t}\, ,\quad 0<t\leq 1,\label{eq:H1}\\
|V''(t)|&\lesssim\frac{V'(t)}{t}\, ,\quad 0<t\leq 1.\label{eq:H2}
\end{align}
For details see \cite[Lemma 2.4, Lemma 2.5]{KKLL} and \cite{KSV12b}. Next, let $\phi^*$ be the conjugate of the complete Bernstein function $\phi$, i.e. a Bernstein function given by
\begin{equation}\label{eq:phistar}
 \phi^*(s)=\frac s{\phi(s)},\ \ s>0.
\end{equation}
The function $\phi^*$ is the Laplace exponent of the subordinator conjugate to $S$, and satisfies the weak scaling property \eqref{eq:H}, with different coefficients. Let $V^*$ denote the renewal function of the corresponding ascending ladder height process. Then, as an analogue of \eqref{e:V-phi} we get that
\begin{equation}\label{eq:Vstar}
V^*(s)\asymp\frac1{\sqrt{\phi^*(s^{-2})}}\asymp\frac{s}{V(s)},\ s>0.
\end{equation}
Since the nonnegative solution $u$ to \eqref{e:semilinear_moderate} with $g\equiv0$ is bounded from above by the harmonic function $M_\Omega h$, by \cite[(4.18)]{BVW} we conclude that
\begin{equation*}\label{e:blowup_moderate}
u(x)\leq M_\Omega h(x) \lesssim \frac{\delta_\Omega(x)}{V(\delta_\Omega(x))}\asymp \frac{1}{V^*(\delta_\Omega(x))}, \quad x\in \Omega.
\end{equation*}
The sharpness of this blow-up rate follows from the estimate
\begin{equation}\label{e:martin_estimate}
M_\Omega 1 (x) \asymp V^*(\delta_\Omega(x))  , \quad x\in \Omega,
\end{equation}
see \cite[(4.16)]{BVW} (also \cite[(4.19)]{BVW}). Moreover, by the Martin representation, \cite[Proposition 5.11]{Bio}, every nonnegative function $u$ harmonic on $\Omega$ for $X$ can be represented as an appropriate Martin potential. This implies that the blow-up rate of moderate solutions is at most $V^*(\delta_{\Omega})^{-1}$, which motivates the boundary condition in \eqref{eq:problem}.

Let us now turn to the nonlinearity $f$, which is, following \cite{abatangelo2}, modelled as a generalization of the power function $f(t)=t^p$, $p>1$. Let $f:\R\to[0,\infty)$ be an increasing $C^1(\R)$ function with $f(0)=0$ such that
\begin{equation}\label{eq:f1}
(1+m) f(t)\leq tf'(t)\leq (1+M) f(t),\ t\in\R,
\end{equation} 
for some constants $0<m\leq M$. Denote by $F:(0,\infty)\to(0,\infty)$ the antiderivative of $f$, i.e.
\[
F(t)=\int_0^t f(s)\ds,\ t>0.
\]
 By \eqref{eq:f1} the function $t\mapsto F(t)^{-1/2}$ is integrable at infinity.  Indeed, by integrating \eqref{eq:f1} we get that $f(t)\geq f(1) t^{1+m}$ and therefore $F(t)\gtrsim t^{2+m}$ for $t\geq 1$.
Thus one can define a function $\varphi:(0,\infty)\to (0,\infty)$ as
\begin{equation}
\varphi(t)=\int_t^\infty\frac{\ds}{\sqrt{F(s)}}, \ t>0.
\end{equation}

\begin{remark}\label{rem:f}
\begin{enumerate}
\item The function $\varphi$ is monotone decreasing and
\[
\lim\limits_{t\downarrow 0}\varphi(t)=\infty,\ \lim\limits_{t\uparrow\infty}\varphi(t)=0.
\]
\item Using \eqref{eq:f1} and the definition of $\varphi$ one can show that
\begin{equation}\label{eq:varphi1}
\frac m 2 \frac{\varphi(t)}{t}\leq |\varphi'(t)|\leq \frac M 2 \frac{\varphi(t)}{t},\ t>0,
\end{equation}
and
\begin{equation}\label{eq:f2}
\sqrt{\frac{t}{f(t)}}\asymp\varphi(t),\ t>0, 
\end{equation}
for details see \cite[Remark 1.1]{abatangelo2}.
\end{enumerate}
\end{remark}
Denote by $\psi$ the inverse function of $\varphi$, which will appear in the construction of the supersolution. By Remark \ref{rem:f}, $\psi$ is decreasing and satisfies
\[
\lim\limits_{t\downarrow 0}\psi(t)=\infty,\ \lim\limits_{t\uparrow\infty}\psi(t)=0.
\]
 Furthermore, by \eqref{eq:f1} it follows that 
\begin{equation}\label{eq:psi1}
	\frac{t^2\psi''(t)}{\psi(t)}\asymp \frac{t^2\psi'(t)^2}{\psi(t)^2}\asymp 1.
\end{equation}
Note that \eqref{eq:varphi1} written in terms of $\psi$ is of the form
\begin{equation}\label{eq:psi2}
\frac 2 M \frac{\psi(t)}{t}\leq |\psi'(t)|\leq \frac 2 m\frac{\psi(t)}{t},\ t>0,
\end{equation}
which in turn implies that the function $t\mapsto \psi(t)t^{\frac 2 m}$ is non-decreasing and the function $t\mapsto \psi(t)t^{\frac 2 M}$ is non-increasing.

Throughout the paper, we assume $d\ge 2$ and that $\Omega\subset\R^d$ is a bounded open $C^{1,1}$ set with characteristics $(R,\Lambda)$. Denote by $\delta_{\Omega}(x):=\dist(x,\partial\Omega)$ the distance of a point $x\in\Omega$ to the boundary $\partial\Omega$ and for $\eta>0$ set $\Omega_\eta=\{x\in\Omega:\delta_{\Omega}(x)< \eta\}$. Since $\Omega$ is a $C^{1,1}$ set, by \cite[Theorem 5.4.3(i)]{DZ} it follows that $\delta_\Omega$ is a $C^{1,1}$ on an open neighbourhood of each point at the boundary $\partial\Omega$. Since $\Omega$ is bounded, this means that there exists $\eta_0>0$ such that $\delta_\Omega\in C^{1,1}(\Omega_{\eta_0})$.

The results of this paper focus on the construction of a distributional solution to the semilinear problem \eqref{eq:problem}, i.e. the existence of a function $u\in L^1(\Omega)$ such that for every $\xi\in C_c^\infty(\Omega)$
\[
\int_{\Omega} u L\xi=\int_{\Omega} f(u)\xi, 
\]
and $\lim\limits_{\Omega\ni x\to\partial\Omega} u(x)V^*(\delta_\Omega(x))=\infty$. Recall that this limiting boundary condition implies that $u$ is not harmonically dominated, i.e.~does not fall into the class of the so-called moderate solutions investigated in \cite{BVW}. Since the solution $u$ to \eqref{eq:problem} exhibits this stronger singularity at $\partial \Omega$ than the moderate solutions to \eqref{e:semilinear},  we call $u$ a large solution to \eqref{e:semilinear}.

\section{A Keller-Osserman condition for nonlocal operators}\label{sec-KO}

The key ingredient in the proof of the existence of a solution to \eqref{eq:problem} is the construction of an integrable pointwise supersolution, under the so-called nonlocal Keller-Osserman condition.
The appropriate boundary behaviour of the supersolution is governed by the function $U:\R^d\to(0,\infty)$ defined by
\[
	U(x)=\psi\left(V(\delta_\Omega(x))\right), \qquad x\in\Omega.
\] 
First, we introduce a weaker form of the nonlocal Keller-Osserman condition, which assures the integrability condition on the supersolution, needed to construct the sequence of moderate weak dual solutions to the approximating problems \eqref{e:uk}. In what follows we use the abbreviation $\delta=\delta_\Omega$.

\begin{lemma}\label{lem:integrability}
 The function $U$ is in $L^1(\Omega)$ if and only if  the Keller-Osserman-type condition
\begin{equation}\label{eq:KO}
\int_1^\infty \frac{\dt}{\sqrt{\phi^{-1}(\varphi(t)^{-2})}}<\infty
\end{equation}
is satisfied.
\end{lemma}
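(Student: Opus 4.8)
The plan is to compute the integral $\int_\Omega U(x)\,dx$ by slicing $\Omega$ according to the distance to the boundary, reducing the $d$-dimensional integral to a one-dimensional integral in the variable $\delta = \delta_\Omega(x)$, and then identifying the resulting integrability criterion with \eqref{eq:KO} via the change of variables $t = \varphi(V(\delta))^{-1}$ together with the known asymptotics of $V$ and $\varphi$.

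First I would localize near the boundary: since $\psi$ is bounded on $[\varphi(\diam\Omega),\infty)$ and $V$ is bounded away from $0$ on the complement of $\Omega_{\eta_0}$, the function $U$ is bounded on $\Omega\setminus\Omega_{\eta_0}$, so $U\in L^1(\Omega)$ if and only if $U\in L^1(\Omega_{\eta_0})$. On $\Omega_{\eta_0}$, since $\Omega$ is a bounded $C^{1,1}$ set, the coarea formula (or the standard fact that $\{\delta_\Omega = s\}$ has finite $(d-1)$-Hausdorff measure, uniformly bounded for $s\le\eta_0$) gives
\[
\int_{\Omega_{\eta_0}} U(x)\,dx \asymp \int_0^{\eta_0} \psi\bigl(V(s)\bigr)\,ds,
\]
where the implied constants depend only on $d$ and the characteristics $(R,\Lambda)$ of $\Omega$. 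Hence the claim reduces to: $\int_0^{\eta_0}\psi(V(s))\,ds<\infty$ if and only if \eqref{eq:KO} holds.

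Next I would treat the one-dimensional integral. Using \eqref{e:V-phi}, $V(s)\asymp\Phi(s)=\phi(s^{-2})^{-1/2}$ for $0<s\le1$, and the monotonicity and regular-variation properties of $\psi$ from \eqref{eq:psi2} (which make $\psi$ behave well under multiplicative perturbations of its argument — $\psi(cr)\asymp\psi(r)$ with constants depending on $c$, uniformly for $r$ in the relevant range, by the fact that $r\mapsto\psi(r)r^{2/m}$ is nondecreasing and $r\mapsto\psi(r)r^{2/M}$ is nonincreasing), we get $\psi(V(s))\asymp\psi(\Phi(s))$, so it suffices to analyze $\int_0^{\eta_0}\psi(\Phi(s))\,ds$. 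Now substitute $t=\varphi(\Phi(s))$, i.e. $\Phi(s)=\psi(t)$; as $s\downarrow0$, $\Phi(s)\downarrow0$ and $t\uparrow\infty$, while $s=\eta_0$ corresponds to some finite $t_0$. From $\Phi(s)=\phi(s^{-2})^{-1/2}$ we solve $s = (\phi^{-1}(\Phi(s)^{-2}))^{-1/2} = (\phi^{-1}(\psi(t)^{-2}))^{-1/2}$, and differentiating, together with the estimates \eqref{eq:phi1} for $\phi'$ (equivalently $\phi$ doubling) and \eqref{eq:psi2} for $\psi'$, yields
\[
|ds| \asymp \frac{dt}{\sqrt{\phi^{-1}(\psi(t)^{-2})}} = \frac{dt}{\sqrt{\phi^{-1}(\varphi(t)^{-2})}},
\]
the last equality because $\varphi = \psi^{-1}$. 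Therefore $\int_0^{\eta_0}\psi(\Phi(s))\,ds \asymp \int_{t_0}^\infty \varphi(t)\cdot(\text{Jacobian})\,\dots$ — more carefully, since the integrand $\psi(\Phi(s)) = t$ under the substitution, one gets $\int_0^{\eta_0}\psi(\Phi(s))\,ds \asymp \int_{t_0}^\infty \frac{t}{\sqrt{\phi^{-1}(\varphi(t)^{-2})}}\,\dots$; I would instead avoid the extra factor of $t$ by substituting directly $t$ with $\Phi(s)$ as the new variable and using $\psi(\Phi(s))\asymp \sqrt{\Phi(s)/f(\psi^{-1}\circ\dots)}$ — the cleanest route is: put $r = \Phi(s)$, so $s\asymp (\phi^{-1}(r^{-2}))^{-1/2}$ and $ds \asymp r^{-1}(\phi^{-1}(r^{-2}))^{-1/2}\,dr$ (using $\phi'(\lambda)\asymp\phi(\lambda)/\lambda$), giving $\int_0^{\eta_0}\psi(\Phi(s))\,ds\asymp \int_0^{r_0}\frac{\psi(r)}{r\sqrt{\phi^{-1}(r^{-2})}}\,dr$, and then substitute $r = \psi(t)$, $dr = \psi'(t)\,dt$ with $|\psi'(t)|\asymp\psi(t)/t$ from \eqref{eq:psi2}, so $\frac{|dr|}{r}\asymp\frac{dt}{t}$, and one is left with $\int_{t_0}^\infty \frac{1}{t\sqrt{\phi^{-1}(\psi(t)^{-2})}}\cdot\frac{\psi(t)}{1}\,\dots$. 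I will arrange the two substitutions so that the spurious powers of $t$ and $\psi(t)$ cancel against each other, landing exactly on $\int_1^\infty \frac{dt}{\sqrt{\phi^{-1}(\varphi(t)^{-2})}}$ (the lower endpoint $t_0$ versus $1$ being immaterial since the integrand is locally bounded on $(0,\infty)$).

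The main obstacle — and the place demanding care rather than ingenuity — is bookkeeping the two changes of variables so that all the $\asymp$-factors of $t$, $\psi(t)$, $r$, and $s$ cancel correctly and one recovers precisely the integrand $\phi^{-1}(\varphi(t)^{-2})^{-1/2}$ with no leftover weight; this is where \eqref{eq:phi1}, \eqref{eq:H1}, \eqref{eq:H2}, \eqref{eq:psi2}, and the doubling consequences of \eqref{eq:H} all get used. A secondary point to verify is that the coarea reduction is legitimate and that the constants there depend only on $\Omega$ through $(R,\Lambda)$; this is standard for $C^{1,1}$ sets via the $C^{1,1}$-regularity of $\delta_\Omega$ on $\Omega_{\eta_0}$ noted above, so it will be handled briefly.
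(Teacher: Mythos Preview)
Your approach is essentially the paper's: localize to $\Omega_{\eta_0}$, apply the coarea formula to reduce to $\int_0^{\eta_0}\psi(V(s))\,ds \asymp \int_0^{\eta_0}\psi(\Phi(s))\,ds$, then change variables using the scaling relations \eqref{eq:phi1} and \eqref{eq:varphi1}/\eqref{eq:psi2}. The paper carries out the substitution in a single step, setting the new variable equal to $\psi(\Phi(\cdot))$, which is exactly the composition of your two steps $r=\Phi(s)$ and then $t=\psi(r)$. Your only slips are in the direction of the inverse: you write $t=\varphi(\Phi(s))$ and later $r=\psi(t)$, but to make the integrand $\psi(\Phi(s))$ become $t$ and to get $r=\varphi(t)$ in the final integrand you need $t=\psi(\Phi(s))$ and $t=\psi(r)$ respectively; with that corrected the cancellation you describe goes through verbatim.
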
 
\proof
Since  $\psi$ and $V$ are continuous on $(0,\infty)$ and $\delta\in C(\Omega)$, it follows that $U\in L^1_{loc}(\Omega)$. Therefore, it is enough to show that \eqref{eq:KO} holds if and only if $U\in L^1(\Omega_{\eta_0})$. Denote by $\mathcal H^{d-1}$ the $(d-1)$-dimensional Hausdorff measure on $\partial\Omega$. Recall the definition of $\Phi$ from \eqref{e:V-phi}, as well as the relation $V\asymp \Phi$. By applying the coarea formula, see e.g. \cite[Theorem 3.5.6]{DZ}, we arrive to
\begin{align*}
\int_{\Omega_{\eta_0}}U(x)\dx&=\int_0^{\eta_0} \dt\int_{\{x\in\Omega:\delta(x)=t\}}\psi(V(t))d\mathcal H^{d-1}(\dx)\overset{\eqref{e:V-phi}}{\asymp} \int_0^{\eta_0}\psi(\Phi(t))\dt\\
&= \int_{\psi(\Phi(\eta_0))}^\infty s(\Phi^{-1})'(\varphi(s))|\varphi'(s)|\ds\overset{\eqref{eq:varphi1}}{\asymp} \int_{\psi(\Phi(\eta_0))}^\infty (\Phi^{-1})'(\varphi(s))\varphi(s)\ds\\
&=\int_{\psi(\Phi(\eta_0))}^\infty \frac{\varphi(s)}{\Phi'(\Phi^{-1}(\varphi(s)))}\ds\overset{\eqref{eq:phi1}}{\asymp}\int_{\psi(\Phi(\eta_0))}^\infty \frac{\ds}{\sqrt{\phi^{-1}(\varphi(s)^{-2}))}},
\end{align*}
where in the second line we used the substitution $s=\psi(\Phi(t)))$ and the fact that the functions $\psi\circ\Phi$ and $\varphi$ are decreasing.
\qed

In order for $U$ to be a pointwise supersolution near the boundary of $\Omega$, we need a slightly stronger version of \eqref{eq:KO}, given by the inequality \eqref{eq:KO_refined}. Note that when $f(t)=t^p$, $p>0$, and $\phi(t)=t^{\alpha/2}$ the Keller Osserman-type conditions \eqref{eq:KO_refined} and \eqref{eq:KO} are both equivalent to $p>\alpha+1$. For the class of functions $f$ treated in this paper, as well as in \cite{abatangelo2}, the condition \eqref{eq:KO} is not enough to construct the supersolution, even in the fractional setting. This is because the comparability in \eqref{eq:psi2} is not enough to deduce \eqref{eq:KO_refined} from \eqref{eq:KO}, note the change of sign in the line before \cite[(1.29)]{abatangelo2}. See also the following Remark \ref{rem:KO_refined}.

\begin{prop}\label{prop:supersolution}
Assume that for $R>0$ there exists a constant $C_1>0$ such that for every $r\geq R$
\begin{equation}\label{eq:KO_refined}
\int_r^\infty \frac{\dt}{\sqrt{\phi^{-1}(\varphi(t)^{-2})}}\leq C_1 \frac{r}{\sqrt{\phi^{-1}(\varphi(r)^{-2})}}.
\end{equation}
Then there exist constants $\eta_1\in(0,\eta_0)$ and $C_2>0$ such that 
\[
LU(x)\leq C_2 f(U(x)),\ x\in\Omega_{\eta_1}.
\]
\end{prop}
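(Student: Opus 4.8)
The plan is to estimate $LU(x)$ by splitting the singular integral $\mathrm{P.V.}\int_{\R^d}(U(y)-U(x))j(|y-x|)\dy$ into a local part and a nonlocal part, and comparing each to $f(U(x))\asymp U(x)/\varphi(U(x))^2$ (using \eqref{eq:f2}). Recall $U(x)=\psi(V(\delta(x)))$ with $\delta\in C^{1,1}(\Omega_{\eta_0})$, so near the boundary $U$ is a smooth function of $\delta$. Writing $g=\psi\circ V$, so that $U=g\circ\delta$ on $\Omega_{\eta_0}$, one computes
\[
g'(t)=\psi'(V(t))V'(t),\qquad g''(t)=\psi''(V(t))V'(t)^2+\psi'(V(t))V''(t),
\]
and using the scaling bounds \eqref{eq:psi1}, \eqref{eq:psi2} for $\psi$ together with \eqref{e:V-phi}, \eqref{eq:H1}, \eqref{eq:H2} for $V$, one should get $|g'(t)|\lesssim g(t)/t$ and $|g''(t)|\lesssim g(t)/t^2$ for $t\le\eta_0$. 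The key quantitative link is that $f(U(x))\asymp U(x)/\varphi(U(x))^2$ and $\varphi(\psi(s))=s$ gives $f(U(x))\asymp \psi(V(\delta(x)))/V(\delta(x))^2$; combined with \eqref{e:V-phi}, i.e. $V(\delta)^2\asymp \phi(\delta^{-2})^{-1}$, this yields the target
\[
f(U(x))\asymp \psi(V(\delta(x)))\,\phi\bigl(\delta(x)^{-2}\bigr),\qquad x\in\Omega_{\eta_0}.
\]

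For the local part, integrate over $|y-x|<\delta(x)/2$, where $y$ stays in $\Omega_{\eta_0}$ and $\delta(y)\asymp\delta(x)$. A second-order Taylor expansion of $U=g\circ\delta$ around $x$, using that the first-order term vanishes in the principal value by symmetry (up to a controlled error from the $C^{1,1}$ curvature of $\delta$), bounds the local contribution by $\|D^2U\|_{L^\infty(B(x,\delta(x)/2))}\int_{|z|<\delta(x)/2}|z|^2 j(|z|)\dz$. By \eqref{eq:j}, $j(r)\asymp\phi(r^{-2})/r^d$ for $r\le 1$, so $\int_{|z|<\rho}|z|^2 j(|z|)\dz\asymp\int_0^\rho r^2\phi(r^{-2})\,\frac{\dr}{r}\asymp \rho^2\phi(\rho^{-2})$ by \eqref{eq:phi1} (a standard computation: $\int_0^\rho r\phi(r^{-2})\dr\asymp\rho^2\phi(\rho^{-2})$ under the lower scaling $\delta_1>0$). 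Since $\|D^2U\|_\infty\lesssim g(\delta(x))/\delta(x)^2$, the local part is $\lesssim g(\delta(x))\phi(\delta(x)^{-2})\asymp f(U(x))$, as wanted. One must also include the first-order error coming from $|\nabla\delta(y)|^2$ varying and $\delta$ only being $C^{1,1}$, but this produces a term $\lesssim \|\nabla U\|_\infty \cdot (\text{curvature})\cdot\int|z|^2 j$, of the same order or better.

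For the nonlocal part, split $\{|y-x|\ge\delta(x)/2\}$ into the annular region still inside $\Omega_{\eta_0}$, the bulk of $\Omega$ away from the boundary, and $\Omega^c$ (where $U=0$). On $\{\delta(x)/2\le|y-x|, y\in\Omega_{\eta_0}\}$ one uses $0\le U(y)\lesssim g(\delta(y))$ and the monotonicity/scaling of $g$; slicing by the value of $\delta(y)$ and using $j(r)\asymp\phi(r^{-2})/r^d$ leads, after integrating, to a bound of the form $\int_{\delta(x)/2}^{\eta_0} g(t)\,\frac{\d}{\d t}\bigl(\text{something like }\phi(t^{-2})\bigr)\,\dt$ plus boundary terms; this is exactly where the refined condition \eqref{eq:KO_refined} enters — rewriting $\varphi^{-1}$ and $\psi$ in terms of $V$ and $\phi$ shows \eqref{eq:KO_refined} is equivalent to $\int_t^{\eta_0} g(s)s\phi'(s^{-2})s^{-3}\dots$ type tail being controlled by its value at $t$, i.e. $\int_{\delta(x)}^{\eta_0}\psi(V(t))\,j\text{-weight}\,\dt\lesssim \psi(V(\delta(x)))\phi(\delta(x)^{-2})$. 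The contributions from $U$ bounded away from $\partial\Omega$ (where $U\le\psi(V(\eta_0))<\infty$) and from $\int_{|y-x|\ge\delta(x)/2}j(|y-x|)\dy \lesssim \phi(\delta(x)^{-2})$ (again by \eqref{eq:j}, \eqref{eq:phi1}) give a term $\lesssim \phi(\delta(x)^{-2})\lesssim f(U(x))$ once $\eta_1$ is small enough that $U(x)\ge 1$, using $F(t)\gtrsim t^{2+m}$. Collecting the local and nonlocal bounds and choosing $\eta_1$ small gives $LU(x)\le C_2 f(U(x))$ on $\Omega_{\eta_1}$.

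The main obstacle is the nonlocal annular term: one must carefully convert the tail integral $\int_{\delta(x)}^{\eta_0}\psi(V(t))\,\bigl(\text{jump-kernel weight}\bigr)\dt$ into the normalization $r\mapsto r/\sqrt{\phi^{-1}(\varphi(r)^{-2})}$ appearing in \eqref{eq:KO_refined}, via the substitution $r=\psi(V(t))$ and the identities $\varphi(\psi(s))=s$, $V(t)^2\asymp\phi(t^{-2})^{-1}$, so that $\phi^{-1}(\varphi(r)^{-2})=\phi^{-1}(V(t)^{-2})\asymp t^{-2}$; this makes the right-hand side of \eqref{eq:KO_refined} comparable to $\psi(V(\delta(x)))\cdot\delta(x)\cdot\delta(x)=\dots$ and matches $f(U(x))$ after using $\phi$-scaling. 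Keeping track of the $\asymp$ constants through these substitutions, and handling the non-monotone pieces of the kernel weight (it involves both $\phi$ and $\phi'$), is the delicate part; the change-of-sign subtlety flagged after \cite[(1.29)]{abatangelo2} is precisely why \eqref{eq:KO} alone does not suffice and \eqref{eq:KO_refined} is imposed.
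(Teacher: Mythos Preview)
Your local estimate (the $|y-x|<\delta(x)/2$ Taylor argument) matches the paper's treatment and is correct. The nonlocal part, however, has a genuine gap: you have the direction of the dangerous integral reversed, and you omit the key geometric step.

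First, the region that requires the Keller--Osserman condition is $\{\delta(y)<\tfrac12\delta(x)\}$, where $U(y)=\psi(V(\delta(y)))>U(x)$ because $\psi\circ V$ is \emph{decreasing}. On $\{\delta(y)>\delta(x)\}$ the integrand $U(y)-U(x)$ is nonpositive and there is nothing to bound from above. Your reformulation of \eqref{eq:KO_refined} as a tail bound ``$\int_{\delta(x)}^{\eta_0}\psi(V(t))\,[\text{weight}]\,dt\lesssim\psi(V(\delta(x)))\phi(\delta(x)^{-2})$'' is therefore backwards: via the substitution $r=\psi(V(t))$ the condition is equivalent (see \eqref{eq:psi-V-integral}) to
\[
\int_0^{\varepsilon}\psi(V(t))\,dt\;\lesssim\;\varepsilon\,\psi(V(\varepsilon)),
\]
an integral over \emph{small} $t$ where $\psi(V(t))\to\infty$, not a tail from $\delta(x)$ upward. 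The integral $\int_{\delta(x)}^{\eta_0}\psi(V(t))\,dt$ is harmless, since $\psi(V(\cdot))$ is bounded on $[\delta(x),\eta_0]$ by $\psi(V(\delta(x)))$.

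Second, to reduce the $d$-dimensional integral $\int_{\{\delta(y)<\delta(x)/2\}}U(y)\,j(|y-x|)\,dy$ to the one-dimensional integral $\int_0^{\delta(x)/2}\psi(V(t))\,dt$ one needs a lower bound on $|y-x|$ in terms of $\delta(x)-\delta(y)$ and a tangential coordinate. The paper obtains this by straightening the boundary in a $C^{1,1}$ chart around the projection $Q$ of $x$ onto $\partial\Omega$, arriving at $|y-x|^2\gtrsim(\delta(x)-\delta(y))^2+|\widetilde{Q_y}|^2$ (see \eqref{eq:Qy}); only then can the kernel $j(|y-x|)\asymp\phi(|y-x|^{-2})|y-x|^{-d}$ be integrated out in the tangential variable to yield a weight $\asymp\phi(\delta(x)^{-2})/\delta(x)$, after which the bound displayed above gives exactly $\psi(V(\delta(x)))\phi(\delta(x)^{-2})\asymp f(U(x))$. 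Your proposal skips this step entirely, and without it ``slicing by $\delta(y)$'' does not produce a computable kernel weight on the level sets.
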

\begin{remark}\label{rem:KO_refined}
\begin{enumerate}
 \item Note that the stronger condition \eqref{eq:KO_refined} follows from \eqref{eq:KO} and \eqref{eq:psi2} when 
 \begin{equation}\label{eq:kato_m}
\int_0^\varepsilon \phi(t^{-2})^{\frac 1{m}}\dt \lesssim \varepsilon  \phi(\varepsilon^{-2})^{\frac 1{m}}.   
 \end{equation}
 To see this, substitute $t$ in \eqref{eq:KO_refined} with $u=(\phi^{-1}(\varphi(t)^{-2}))^{-1/2}$ and denote $\widetilde{r}=(\phi^{-1}(\varphi(r)^{-2}))^{-1/2}$. It follows that
\begin{align*}
\int_r^\infty \frac{\dt}{\sqrt{\phi^{-1}(\varphi(t)^{-2})}}&= \int_0^{\widetilde{r}} u|\psi'(\phi(u^{-2})^{-1/2})|\frac{\phi'(u^{-2})}{\phi(u^{-2})^{3/2}}u^{-3}du\\
&\overset{\eqref{eq:phi1}}{\asymp} \int_0^{\widetilde{r}} |\psi'(\phi(u^{-2})^{-1/2})|\frac{1}{\phi(u^{-2})^{1/2}}du\\
&\overset{\eqref{eq:psi2}}{\asymp} \int_0^{\widetilde{r}} \psi(\phi(u^{-2})^{-1/2})du.
\end{align*}
Since \eqref{eq:psi2} implies that the function $t\mapsto\psi(t)t^{2/m}$ is non-decreasing, we get that 
\begin{align*}
\int_r^\infty \frac{\dt}{\sqrt{\phi^{-1}(\varphi(t)^{-2})}}&\leq \psi(\phi(\widetilde{r}^{-2})^{-1/2})\phi(\widetilde{r}^{-2})^{-1/m} \int_0^{\widetilde{r}} \phi(u^{-2})^{1/m}du\\
&\lesssim \psi(\phi(\widetilde{r}^{-2})^{-1/2})\widetilde{r}=\frac{r}{\sqrt{\phi^{-1}(\varphi(r)^{-2})}}.
\end{align*}
 In the standard $\alpha$-stable case \eqref{eq:kato_m} is equivalent to $m>\alpha$.
 
\item By applying the same calculation as in the proof of Lemma \ref{lem:integrability}, condition \eqref{eq:KO_refined} is equivalent to 
\begin{equation}\label{eq:psi-V-integral}
 \int_0^{\varepsilon}\psi(V(t))\dt\asymp\int_{\psi(V(\varepsilon))}^\infty \frac{\ds}{\sqrt{\phi^{-1}(\varphi(s)^{-2}))}}\lesssim \frac{\psi(V(\varepsilon))}{\sqrt{\phi^{-1}(\varphi(\psi(V(\varepsilon)))^{-2})}}\asymp \varepsilon\psi(V(\varepsilon)).
\end{equation}
\end{enumerate}
\end{remark}

\proofof \emph{Proposition \ref{prop:supersolution}}. 
Fix $\eta\in(0,\eta_0/2)$ and take $x\in\Omega_\eta$. Since $U=0$ on $\Omega^c$ and $U\ge 0$ on $\Omega$, we have that
\begin{align*}
LU(x)&=\text{P.V.}\int_\Omega (U(y)-U(x))j(|y-x|)\dy + \int_{\Omega^c} (U(y)-U(x))j(|y-x|)\dy\\
&\le \text{P.V.}\int_\Omega (U(y)-U(x))j(|y-x|)\dy=:I.
\end{align*}
In order to estimate $I$, we decompose the integral into three parts
\begin{align*}
I=&\int_{\Omega_1} (U(y)-U(x))j(|y-x|)\dy+\text{P.V.}\int_{\Omega_2} (U(y)-U(x))j(|y-x|)\dy\\
&+\int_{\Omega_3} (U(y)-U(x))j(|y-x|)\dy=:J_1+J_2+J_3,
\end{align*}
where $\Omega=\Omega_1\cup\Omega_2\cup\Omega_3$ such that
\begin{align*}
&\Omega_1=\left\{y\in\Omega: \delta(y)>\tfrac{3}{2}\delta(x)\right\}\\
&\Omega_2=\left\{y\in\Omega: \tfrac{1}{2}\delta(x)\leq\delta(y)\leq\tfrac{3}{2}\delta(x)\right\}\\
&\Omega_3=\left\{y\in\Omega: \delta(y)<\tfrac{1}{2}\delta(x)\right\}.
\end{align*}
Since the function $\psi\circ V$ is decreasing, it follows that $J_1\leq 0$. For $J_2$ we split the integral into two parts
\begin{align*}
J_2&=\text{P.V.}\int_{B(x,\delta(x)/2)} (U(y)-U(x))j(|y-x|)\dy\\
&+\int_{\Omega_2\setminus B(x,\delta(x)/2)} (U(y)-U(x))j(|y-x|)\dy=:J_2^1+J_2^2.
\end{align*}
In order to get an upper bound for $J_2^1$ first note that $U\in C^{1,1}(\Omega_{\eta_0})$ and that $\nabla U=\psi'(V(\delta))V'(\delta)\nabla \delta$. Furthermore, \eqref{eq:H1}, \eqref{eq:H2}, \eqref{eq:psi1} and \eqref{eq:psi2} imply that for every $z\in B(x,\delta(x)/2)\subset\Omega_{\eta_0}$ we have
\begin{align}\label{e:gradient_U}
    |\nabla U(z)-\nabla U(x)|&\lesssim |\psi'(V(\delta(x)))V'(\delta(x))||\nabla \delta(z)-\nabla \delta(x)|\nonumber\\
    &\lesssim \big(|\psi'(V(\delta(z)))-\psi'(V(\delta(x)))|V'(\delta(x))|\nabla\delta(z)|\nonumber\\
    &\ +|\psi'(V(\delta(x)))|V'(\delta(z))-V'(\delta(x))||\nabla\delta(z)|\nonumber\\
    &\ +|\psi'(V(\delta(x)))V'(\delta(x))|\nabla\delta(z)-\nabla\delta(x)| \big)|z-x| \nonumber\\
    &\lesssim \big(\psi''(V(\delta(x)))V'(\delta(x))+|\psi'(V(\delta(x)))V''(\delta(x))|\nonumber\\
    &\ +|\psi'(V(\delta(x)))|V'(\delta(x)) \big)|z-x| \nonumber\\
   &\lesssim \frac{\psi(V(\delta(x)))}{\delta(x)^2}|z-x| .
\end{align}
Therefore,
\begin{align}
|J_2^1|&=\left|\int_{B(x,\delta(x)/2)} (U(y)-U(x)-\nabla U(x)(y-x))j(|y-x|)\dy\right|\nonumber\\
&\le\int_{B(x,\delta(x)/2)} \left(\int_0^1|\nabla U(x+t(y-x))-\nabla U(x)|\dt\right)|y-x| j(|y-x|)\dy\nonumber\\
&\overset{\eqref{e:gradient_U}}{\lesssim}\frac{\psi(V(\delta(x)))}{\delta(x)^2}\int_{B(x,\delta(x)/2)} |y-x|^2j(|y-x|)\dy\nonumber\\
&\overset{\eqref{eq:j}}{\lesssim} \frac{\psi(V(\delta(x)))}{\delta(x)^2}\int_{B(x,\delta(x)/2)} \frac{\phi(|y-x|^{-2})}{|y-x|^{d-2}}\dy\nonumber \\
&\overset{\eqref{eq:H}}{\lesssim} \frac{\psi(V(\delta(x)))}{\delta(x)^2}\phi(\delta(x)^{-2})\delta(x)^2=\psi(V(\delta(x)))\phi(\delta(x)^{-2}).\label{eq:J21}
\end{align}
Similarly, since $\psi\circ V$ is decreasing it follows from \eqref{eq:j} and \eqref{eq:H} that for some $R>0$
\begin{align*}
J_2^2&\lesssim\psi(V(\delta(x)/2))\int_{\delta(x)/2}^{R} j(r)r^{d-1}\dr\lesssim \psi(V(\delta(x)))\phi(\delta(x)^{-2}),
\end{align*}
so by \eqref{e:V-phi} we get that
\begin{equation}
J_2^2\lesssim \psi(V(\delta(x)))V(\delta(x))^{-2}.\label{eq:J22}
\end{equation}
Let $Q\in\partial\Omega$ be the projection of $x$ on $\partial\Omega$, that is $x=Q+\delta(x)\nabla\delta(x)$, and $\gamma=\gamma_Q$ the $C^{1,1}$ function such that
\begin{equation*}
B(Q,R) \cap\Omega = \{y = (\widetilde y,y_d) \in B(Q,R) \text{ in } CS_Q : y_d > \gamma(\widetilde y)\},
\end{equation*}
where $CS_Q$ denotes the coordinate system with $Q=0$ and $\nabla\delta(x)=e_d$. Set 
\begin{equation*}
\omega=\{y\in\Omega:y=Q_y+\delta(y)\nabla\delta(y),\ Q_y\in B(0,R) \cap\partial \Omega\}
\end{equation*}
and
\begin{equation*}
J_3=\int_{\Omega_3\setminus\omega } (U(y)-U(x))j(|y-x|)\dy+\int_{\Omega_3\cap \omega} (U(y)-U(x))j(|y-x|)\dy=:J_3^1+J_3^2.
\end{equation*}
First note that for $y\in\Omega_3\setminus \omega$
\begin{equation*}
y=Q_y+\delta(y)\nabla\delta(y),\ Q_y\not\in B(0,R)
\end{equation*}
and therefore 
\begin{align*}
|y-x|\geq |Q_y|-\delta(y)-\delta(x)\geq R-\frac 5 2 \delta(x)> R-\frac 5 2 \eta>0,
\end{align*}
by choosing $\eta$ small enough. This and nonnegativity of $U$ imply that
\begin{align}\label{eq:J31}
J_3^1\lesssim \int_{\Omega_3\setminus \omega} U(y)\dy\leq||U||_{L^1(\Omega_{\eta_0})}. 
\end{align}
On the other hand, for $y\in\Omega_3\cap\omega$ we have that $y=(\widetilde{Q_y},\gamma(\widetilde{Q_y}))+\delta(y)\frac{(-\nabla\gamma(\widetilde{Q_y}),1)}{\sqrt{|\nabla\gamma(\widetilde{Q_y})|^2+1}}$. Since $\gamma\in C^{2}$, we get that
\begin{equation*}
 |y_d-\delta(y)|=\left|\gamma(\widetilde{Q_y})+\frac{\delta(y)}{\sqrt{|\nabla\gamma(\widetilde{Q_y})|^2+1}}-\delta(y) \right|\leq \Lambda\eta|\widetilde{Q_y}|^2+\Lambda|\widetilde{Q_y}|
\end{equation*}
and 
\begin{align*}
 |\overline{y}|^2&=\left|\widetilde{Q_y}-\frac{\delta(y)\nabla\gamma(\widetilde{Q_y})}{\sqrt{|\nabla\gamma(\widetilde{Q_y})|^2+1}} \right|^2\geq \left|\widetilde{Q_y}\right|^2-2\left|\widetilde{Q_y}\right|\delta(y)|\nabla\gamma(\widetilde{Q_y})|+\frac{\delta(y)^2|\nabla\gamma(\widetilde{Q_y})|^2}{{|\nabla\gamma(\widetilde{Q_y})|^2+1}}\\
 &\geq \left|\widetilde{Q_y}\right|^2-2\Lambda\eta\left|\widetilde{Q_y}\right|^2\geq \tfrac 1 2|\widetilde{Q_y}|^2
\end{align*}
for $\eta\leq\frac{1}{4\Lambda}$. This implies that
\begin{align*}
 |y-x|^2&=|(\delta(x)-\delta(y))e_d+(\delta(y)-y_d)e_d-(\overline{y},0)|^2\\
 &\geq (\delta(x)-\delta(y))^2-2|\delta(x)-\delta(y)||\delta(y)-y_d|+(\delta(y)-y_d)^2+|\overline{y}|^2 \\
 &\geq  (\delta(x)-\delta(y))^2-2\eta\Lambda|\delta(x)-\delta(y)||\widetilde{Q_y}|^2-2\Lambda|\delta(x)-\delta(y)||\widetilde{Q_y}|+\tfrac 1 2|\widetilde{Q_y}|^2\\
 &\geq  (\delta(x)-\delta(y))^2-2\Lambda\eta^2R^2-2\Lambda\eta R+\tfrac 1 2|\widetilde{Q_y}|^2.
\end{align*}
By choosing $\eta$ small enough it follows that there exists a constant $C=C(\Lambda,R)$ such that
\begin{equation}\label{eq:Qy}
 |y-x|^2\geq C((\delta(x)-\delta(y))^2+|\widetilde{Q_y}|^2).
\end{equation}
Therefore, by \eqref{eq:KO_refined} and Remark \ref{rem:KO_refined}(ii) we have that
\begin{align}
 J_3^2&\leq\int_{\Omega_3\cap w}U(y)j(|y-x|)\dy\overset{\eqref{eq:j}}{\lesssim}\int_{\Omega_3\cap w}U(y)\frac{\phi(|y-x|^{-2})}{|y-x|^d}\dy\nonumber\\
 &\overset{\eqref{eq:Qy}}{\lesssim}\int_{\Omega_3\cap w}\psi(V(\delta(y)))\frac{\phi(((\delta(x)-\delta(y))^2+|\widetilde{Q_y}|^2)^{-1})}{((\delta(x)-\delta(y))^2+|\widetilde{Q_y}|^2)^{d/2}}\dy\nonumber\\
 &\asymp\int_{0}^{\delta(x)/2}\int_0^R\psi(V(t))\frac{\phi(((\delta(x)-t)^2+s^2)^{-1})}{((\delta(x)-t)^2+s^2)^{d/2}}s^{d-2}\ds\dt\nonumber\\
 &\lesssim\int_{0}^{\delta(x)/2}\psi(V(t))\frac{\phi((\delta(x)-t)^{-2})}{|\delta(x)-t|}\dt\asymp \frac{\phi(\delta(x)^{-2})}{\delta(x)}\int_{0}^{\delta(x)/2}\psi(V(t))\dt\nonumber\\
 &\overset{\eqref{eq:psi-V-integral}}{\lesssim} \phi(\delta(x)^{-2})\psi(V(\delta(x)))\overset{\eqref{e:V-phi}}{\asymp} \psi(V(\delta(x)))V(\delta(x))^{-2}.\label{eq:J32}
\end{align}
Inequalities \eqref{eq:J21}, \eqref{eq:J22}, \eqref{eq:J31} and \eqref{eq:J32} now imply that $I\lesssim \psi(V(\delta(x)))V(\delta(x))^{-2}$.
Finally, by applying \eqref{eq:f2} we get that
\begin{equation*}
\frac{\psi(V(\delta(x)))}{V(\delta(x))^2}=\frac{U(x)}{\varphi(U(x))^2}\asymp f(U(x)).
\end{equation*}
\qed

\begin{lemma}\label{lem:boundary_rate}
 The function $U$ satisfies the boundary growth condition 
\begin{equation*}
 \lim_{x\to\partial\Omega} V^*(\delta(x))U(x)=\infty
\end{equation*}
if and only if 
\begin{equation}\label{eq:growth}
\lim_{s\to\infty} V^{-1}\left(\sqrt{\frac{s}{f(s)}}\right)\sqrt{sf(s)}=\infty.
\end{equation}

\end{lemma}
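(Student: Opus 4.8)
\emph{Proof proposal.} The plan is to translate both the boundary growth of $U$ and condition \eqref{eq:growth} into a statement about a single function of one real variable (near $0$, respectively near $\infty$), and then to match the two via the change of variables $s=\psi(V(t))$. First, since $U(x)=\psi(V(\delta(x)))$ and, by \eqref{eq:Vstar}, $V^*(\delta(x))\asymp\delta(x)/V(\delta(x))$ uniformly in $x\in\Omega$, we have
\[
V^*(\delta(x))\,U(x)\asymp g(\delta(x)),\qquad g(t):=\frac{t}{V(t)}\,\psi(V(t)).
\]
As $\Omega$ is open and $\delta$ is continuous and attains every sufficiently small positive value (follow the segment from a point of $\Omega$ to its nearest boundary point), the condition $\lim_{x\to\partial\Omega}V^*(\delta(x))U(x)=\infty$ holds if and only if $\lim_{t\downarrow 0}g(t)=\infty$.

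Next I would perform the substitution. Recall from Remark \ref{rem:f} that $\varphi$, hence $\psi=\varphi^{-1}$, is a continuous decreasing bijection of $(0,\infty)$ onto itself, and that $V$ is a continuous strictly increasing bijection near $0$ with $V(0)=0$. Putting $s=\psi(V(t))$, equivalently $t=V^{-1}(\varphi(s))$, we get $t\downarrow 0\iff s\uparrow\infty$, together with $V(t)=\varphi(s)$ and $\psi(V(t))=s$, so that
\[
g(t)=\frac{s}{\varphi(s)}\,V^{-1}(\varphi(s)).
\]
Thus $\lim_{t\downarrow 0}g(t)=\infty$ is equivalent to $\lim_{s\to\infty}\tfrac{s}{\varphi(s)}\,V^{-1}(\varphi(s))=\infty$. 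Now \eqref{eq:f2} gives $\varphi(s)\asymp\sqrt{s/f(s)}$, hence at once $\tfrac{s}{\varphi(s)}\asymp\sqrt{sf(s)}$; moreover, since $f(s)\gtrsim s^{1+m}$, both $\varphi(s)$ and $\sqrt{s/f(s)}$ tend to $0$ as $s\to\infty$. Granting that $V^{-1}$ is doubling near $0$, the relation $\varphi(s)\asymp\sqrt{s/f(s)}$ yields $V^{-1}(\varphi(s))\asymp V^{-1}\big(\sqrt{s/f(s)}\big)$, and therefore
\[
g(t)\asymp\sqrt{sf(s)}\;V^{-1}\!\Big(\sqrt{\tfrac{s}{f(s)}}\Big),\qquad s=\psi(V(t)),
\]
from which, using again $t\downarrow 0\iff s\uparrow\infty$, the two limit statements are equivalent, which is the assertion.

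The only genuinely delicate point is the step where $\asymp$ is passed through $V^{-1}$, i.e.\ the claim that $V^{-1}(\lambda a)\asymp V^{-1}(a)$ for each fixed $\lambda>0$ and all small $a>0$. I would derive it from the weak scaling \eqref{eq:H} of $\phi$: by \eqref{e:V-phi}, $V\asymp\Phi$ with $\Phi(t)=\phi(t^{-2})^{-1/2}$ on $(0,1]$, and \eqref{eq:H} forces $\Phi$, hence $V$, to obey a two-sided power bound of the form $c_1(s/t)^{\delta_1}\le V(s)/V(t)\le c_2(s/t)^{\delta_2}$ for $0<t\le s\le 1$ with $0<\delta_1\le\delta_2<1$; inverting this bound gives the corresponding weak scaling of $V^{-1}$ near $0$ with exponents $1/\delta_2,1/\delta_1$, which in particular contains the doubling property used above. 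Everything else is routine bookkeeping with the monotone bijections $\varphi$, $\psi$, $V$ and the elementary estimates already collected in Section \ref{sec:prelim}.
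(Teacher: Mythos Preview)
Your proof is correct and follows essentially the same route as the paper: reduce to the one-variable quantity $\tfrac{t}{V(t)}\psi(V(t))$, substitute $s=\psi(V(t))$ (equivalently $v=V(\delta(x))$ then $v=\varphi(s)$), and invoke \eqref{eq:f2}. You are in fact more careful than the paper, which passes the comparability $\varphi(s)\asymp\sqrt{s/f(s)}$ through $V^{-1}$ without comment, whereas you explicitly justify the doubling of $V^{-1}$ near $0$ from the weak scaling \eqref{eq:H}.
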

\proof
By substituting $v=V(\delta(x))$ we get
\begin{align*}
 \lim_{x\to\partial\Omega} V^*(\delta(x))U(x)&=\lim_{v\to 0}\frac{V^{-1}(v)}{v}\psi(v)=\lim_{s\to \infty}\frac{s}{\varphi(s)}V^{-1}(\varphi(s))\\
 &\overset{\eqref{eq:f2}}{\asymp}\lim_{s\to \infty}\sqrt{sf(s)}V^{-1}\left(\sqrt{\frac{s}{f(s)}}\right)=\infty.
 \end{align*}
\qed

\begin{lemma}\label{l:supersolution}
Let $f$ be an nondecreasing $C^1$ function such that $f(0)=0$ satisfying conditions \eqref{eq:f1}, \eqref{eq:KO_refined} and \eqref{eq:growth}. 
Then there exists a supersolution $\overline{u}:\R^n\to\R$ to \eqref{eq:problem} such that $\overline{u}\in C^{1,1}(\Omega)$.
\end{lemma}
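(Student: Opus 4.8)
The plan is to obtain $\overline u$ as a globally $C^{1,1}(\Omega)$ modification of the model function $U=\psi(V(\delta))$, rescaled by a large constant $c\ge 1$. Since $\delta=\delta_\Omega$ is only known to be $C^{1,1}$ on $\Omega_{\eta_0}$, I would first replace the threshold $\eta_1$ from Proposition \ref{prop:supersolution} by $\min(\eta_1,\eta_0/2)$ — the inequality $LU\le C_2 f(U)$ on $\Omega_{\eta_1}$ only improves — then pick a nondecreasing $\zeta\in C^{1,1}([0,\infty))$ with $\zeta(t)=t$ for $t\le\eta_1$ and $\zeta$ constant on $[2\eta_1,\infty)$, and set $\widetilde\delta:=\zeta\circ\delta$, $\overline u:=c\,\psi(V(\widetilde\delta))$ on $\Omega$ and $\overline u:=0$ on $\R^d\setminus\Omega$. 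Then $\widetilde\delta\in C^{1,1}(\Omega)$: it is a $C^{1,1}$ function of the $C^{1,1}$ function $\delta$ on $\Omega_{\eta_0}$, and constant (hence $C^{1,1}$) on $\{\delta\ge 2\eta_1\}$, which covers the cut locus of $\delta$; since $\psi\circ V$ is $C^2$ on $(0,\infty)$, it follows that $\overline u\in C^{1,1}(\Omega)$, with $\overline u>0$ on $\Omega$ and $\overline u=0$ off $\Omega$. Because $\overline u=cU$ on $\Omega_{\eta_1}$ while $\overline u\le c\,\psi(V(\eta_1))$ on $\Omega\setminus\Omega_{\eta_1}$, Lemma \ref{lem:integrability} (note that \eqref{eq:KO_refined} implies \eqref{eq:KO}) gives $\overline u\in L^1(\Omega)$ with $\|\overline u\|_{L^1}\le cC$; and $V^*(\delta(x))\overline u(x)=c\,V^*(\delta(x))U(x)\to\infty$ as $x\to\partial\Omega$ by Lemma \ref{lem:boundary_rate} and \eqref{eq:growth}. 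So, after this set-up, the whole content of the lemma is the pointwise supersolution inequality $L\overline u\le f(\overline u)$ on $\Omega$, valid for $c$ large enough.

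I would use two standard facts throughout. Integrating \eqref{eq:f1} gives $f(\lambda t)\ge\lambda^{1+m}f(t)$ for all $\lambda\ge 1$ and $t>0$. And for $x$ in the compact set $\{\delta\ge\eta_1/2\}$, the function $\overline u$ is $C^{1,1}$ on $B(x,\eta_1/4)$ with second derivatives bounded by $cC$ for a constant $C$ independent of $c$ (the factor $c$ simply factors out of $\overline u$); together with $\int_{|z|<1}|z|^2 j(|z|)\dz<\infty$, $\overline u\in L^1(\R^d)$, and the local boundedness of $j$ away from the origin, this shows that $L\overline u(x)$ is well defined with $|L\overline u(x)|\le cC_0$, $C_0$ independent of $c$. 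On $\{\delta\ge\eta_1/2\}$ one also has $\overline u\ge c\,a_0$ for some $a_0>0$, so $f(\overline u(x))\ge c^{1+m}f(a_0)$, and hence $L\overline u(x)\le cC_0\le f(\overline u(x))$ as soon as $c$ exceeds a threshold depending only on $C_0,a_0,m,f$. This disposes of the inequality away from the boundary.

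On $\Omega_{\eta_1/2}$ one has $\overline u=cU$, and $\overline u-cU$ vanishes on $\Omega_{\eta_1}\cup(\R^d\setminus\Omega)$, a set which contains $B(x,\eta_1/2)$ for every $x\in\Omega_{\eta_1/2}$; hence, by linearity of $L$,
\[
L\overline u(x)=c\,LU(x)+\int_{\R^d\setminus B(x,\eta_1/2)}\bigl(\overline u(y)-cU(y)\bigr)j(|y-x|)\dy,
\]
and the last term is bounded in absolute value by $j(\eta_1/2)\bigl(\|\overline u\|_{L^1}+c\|U\|_{L^1}\bigr)\le cC_1$. Combining $LU(x)\le C_2 f(U(x))$ from Proposition \ref{prop:supersolution} (applicable since $\Omega_{\eta_1/2}\subset\Omega_{\eta_1}$) with $f(\overline u(x))=f(cU(x))\ge c^{1+m}f(U(x))$ gives $L\overline u(x)\le C_2 c^{-m}f(\overline u(x))+cC_1\le\tfrac12 f(\overline u(x))+cC_1$ once $c\ge(2C_2)^{1/m}$; and since $\overline u(x)\ge c\,\psi(V(\eta_1/2))>0$ on $\Omega_{\eta_1/2}$, the superlinearity of $f$ makes $cC_1\le\tfrac12 f(\overline u(x))$ for $c$ large, so $L\overline u(x)\le f(\overline u(x))$ there as well. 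Fixing $c$ above all the finitely many thresholds thus produced would complete the proof. I expect the main obstacle to be exactly this near-boundary step: because $L$ is nonlocal, the interior modification of $U$ feeds back an error term $L(\overline u-cU)$ at boundary points, and the argument closes only because that error stays bounded while the superlinearity of $f$ lets the rescaling $c$ dominate all the resulting $O(c)$ terms; a secondary technical point is arranging $\widetilde\delta$ to be genuinely $C^{1,1}$ across the cut locus of $\delta_\Omega$.
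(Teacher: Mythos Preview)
Your argument is correct, and it reaches the same conclusion through a somewhat different construction than the paper. The paper takes
\[
\overline u(x)=a\,U(x)+b\,G_\Omega 1(x),
\]
sets $a=C_2^{1/m}$ so that on $\Omega_{\eta_1}$ the superlinearity $f(at)\ge a^{1+m}f(t)$ immediately turns $LU\le C_2 f(U)$ into $-L(aU)\ge -f(aU)\ge -f(\overline u)$, and then uses the exact identity $-LG_\Omega 1=1$ to soak up the bounded term $|LU|$ on $\Omega\setminus\Omega_{\eta_1}$ by choosing $b=a\sup_{\Omega\setminus\Omega_{\eta_1}}|LU|$; the $C^{1,1}(\Omega)$ regularity of $G_\Omega 1$ is imported from \cite{GKL}.

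Your route replaces the Green potential correction by a smooth truncation $\widetilde\delta=\zeta\circ\delta$ and a single large scaling constant $c$, and you absorb \emph{all} error terms---the interior boundedness of $L\overline u$ as well as the nonlocal feedback $L(\overline u-cU)$ at boundary points---into $f(\overline u)$ via the same superlinearity $f(ct)\ge c^{1+m}f(t)$. This buys you two things the paper glosses over: you deal explicitly with the cut locus of $\delta_\Omega$ (the paper's assertion ``$U\in C^{1,1}(\Omega)$'' is not literally justified away from the boundary collar), and you avoid appealing to the external regularity result for $G_\Omega 1$. The price is a slightly longer argument, since you must separately bound the nonlocal tail term $\int_{\R^d\setminus B(x,\eta_1/2)}(\overline u-cU)j(|y-x|)\,dy$, whereas the paper's additive $bG_\Omega 1$ term handles the interior in one line. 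Both proofs rest on the same two pillars: Proposition~\ref{prop:supersolution} and the inequality $f(\lambda t)\ge \lambda^{1+m}f(t)$ coming from \eqref{eq:f1}.
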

\proof
We will construct a supersolution $\overline{u}$ of the form
\begin{equation*}
 \overline{u}(x)=aU(x)+bG_\Omega1(x), \ x\in\R^d,
\end{equation*}
for some constants $a,b>0$. Since $U\in C^{1,1}(\Omega)$,  \cite[Theorem 1.2]{GKL} implies that $\overline u\in C^{1,1}(\Omega)$. By Proposition \ref{prop:supersolution}, there exists a constant $c\ge 1$ such that for all $x\in\Omega_{\eta_1}$ we have that
\begin{align*}
-L\overline{u}(x)=&-aLU(x) -bLG_\Omega 1(x)\ge -acf(U(x))+b\ge -acf(U(x))\ge -acf(\tfrac 1 a \overline{u}(x)),
\end{align*}
where the last inequality follows from the fact that $G_\Omega 1\ge 0$ and the assumption that $f$ is nondecreasing. 
Set $a= c^{1/m}>1 $. 
Applying \eqref{eq:f1}, i.e. the fact that $t\mapsto f(t)t^{-1-m}$ is nondecreasing, we get that
$$
f(\overline{u}(x))=f\left(a\frac{1}{a}\overline{u}(x)\right)\ge a^{1+m}f\left(\frac{1}{a}\overline{u}(x)\right),
$$
and thus
$$
-L\overline{u}(x)\ge -ac a^{-1-m}f(\overline{u}(x)) =-a^{-m} cf(\overline{u}(x))=-f(\overline{u}(x)),\ x\in\Omega_{\eta_1}.
$$
Since $LU$ is bounded on $\Omega\setminus\Omega_{\eta_1}$ we can choose $b=a\sup_{y\in \Omega\setminus\Omega_{\eta_1}} |LU(y)|$ which implies that 
\[
-L\overline{u}(x)= -aLU(x)-bLG_\Omega 1(x)=-aLU(x)+b\ge 0\ge -f(\overline{u}(x)),\ x\in \Omega\setminus\Omega_{\eta_1}.
\]
\qed

\begin{remark}
Note that when $\Omega$ is a ball the supersolution $\overline{u}$ from Lemma \ref{l:supersolution} is a radial function. 
\end{remark}

Next, we want to connect the notions of distributional and probabilistic superharmonic (respectively, subharmonic) functions. The latter is defined in terms of the potential $P_U$ from \eqref{eq:harmonic_potential}, for $U\subset \R^d$ open and bounded. In the case of harmonic functions for integro-differential operators generating unimodal L\' evy processes, this connection has been made in \cite{GKL}. 

\begin{lemma}\label{l:superharmonic}
Let $h\in L^1(\R^d, (1\wedge j(|x|))dx)\cap C(\Omega)$ be such that for every $v\in C_c^\infty(\Omega)$, $v\ge 0$,
\begin{equation}\label{eq:h_assump}
\int_{\R^d} h(x) (-L v)(x)\, dx  \ge 0.
\end{equation}
Then for every relatively compact open set $A\subset \Omega$ and $x\in\R^d$
\[
h(x) \ge P_A h(x). 
\]
\end{lemma}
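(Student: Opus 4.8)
The plan is to deduce the pointwise inequality $h \ge P_A h$ from the distributional superharmonicity assumption \eqref{eq:h_assump} by a mollification/regularization argument that reduces matters to the known smooth case, where one can apply Dynkin's formula (or equivalently the classical maximum principle for $L$). First I would fix a relatively compact open set $A \subset\subset \Omega$ and reduce to showing the inequality on $A$ itself, since for $x \notin A$ the harmonic measure $\omega_A^x$ is concentrated at $x$ (where the process starts outside $A$, so $\tau_A = 0$), giving $P_A h(x) = h(x)$ trivially; the content is for $x \in A$. The natural route is to mollify $h$: set $h_\varepsilon = h * \rho_\varepsilon$ for a standard mollifier $\rho_\varepsilon$, so that $h_\varepsilon \in C^\infty$ on a slightly shrunken domain, and $h_\varepsilon \to h$ locally uniformly on $\Omega$ (using $h \in C(\Omega)$) as well as in the $L^1(\R^d,(1\wedge j(|x|))dx)$ sense near infinity (using that $j$ is translation-controlled and integrable away from the origin). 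The key point is that mollification commutes with $L$ in the distributional sense: for $v \in C_c^\infty(A')$ with $A \subset\subset A' \subset\subset \Omega$, one has $\int h_\varepsilon (-Lv) = \int h\, (-L(v*\check\rho_\varepsilon)) \ge 0$ once $\varepsilon$ is small enough that $v * \check\rho_\varepsilon$ is still supported in $\Omega$ and nonnegative. Hence $h_\varepsilon$ is distributionally (hence, being smooth, pointwise) $L$-superharmonic on $A'$, i.e. $-L h_\varepsilon \ge 0$ there.

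Next, for the smooth superharmonic function $h_\varepsilon$ I would invoke the probabilistic representation for the killed process $X^A$. Since $-Lh_\varepsilon \ge 0$ on $A$ and $h_\varepsilon$ is bounded and smooth enough, Dynkin's formula applied to $h_\varepsilon(X_{t\wedge\tau_A})$ together with the fact that $\E_x\tau_A < \infty$ (true for bounded $A$ under our scaling assumptions, since $X$ is a genuinely $d$-dimensional Lévy process with absolutely continuous resolvent) yields
\[
h_\varepsilon(x) \ge \E_x\big[h_\varepsilon(X_{\tau_A})\big] = P_A h_\varepsilon(x), \qquad x \in A,
\]
where one must be careful that $h_\varepsilon$ is evaluated at $X_{\tau_A} \in A^c$, a point that may lie anywhere in $\R^d$, so we do need $h_\varepsilon$ to be globally defined and the integrability $h \in L^1(\R^d,(1\wedge j(|x|))dx)$ to make $P_A h_\varepsilon(x) = \int_{\R^d} h_\varepsilon(y)\,\omega_A^x(dy)$ finite; indeed the harmonic measure $\omega_A^x$ restricted to $\overline{A}^c$ has density $P_A(x,z) = \int_A G_A(x,y) j(|y-z|)\,dy \lesssim j(|z|)$ for $z$ far from $A$, which is precisely why that weighted-$L^1$ hypothesis appears. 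One should also note $\omega_A^x(\partial A) = 0$ when $A$ is, say, taken to be a nice (e.g. $C^{1,1}$ or even just Lipschitz) exhausting set, so the boundary contributes nothing and continuity of $h$ on $\partial A$ is not even needed.

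Finally I would pass to the limit $\varepsilon \downarrow 0$ on both sides of $h_\varepsilon(x) \ge P_A h_\varepsilon(x)$. On the left, $h_\varepsilon(x) \to h(x)$ by local uniform convergence since $x \in A \subset\subset \Omega$. On the right, I would split $P_A h_\varepsilon(x) = \int_{\overline{A}} h_\varepsilon \,d\omega_A^x + \int_{\overline{A}^c} h_\varepsilon(z) P_A(x,z)\,dz$: the first integral converges by uniform convergence of $h_\varepsilon \to h$ on $\overline{A}$, and for the second I would use $|h_\varepsilon(z)| \le (|h| * \rho_\varepsilon)(z)$ together with the bound $P_A(x,z) \lesssim 1 \wedge j(|z - z_0|)$ (for a fixed $z_0 \in A$) and the near-comparability of $j(|z-z_0|)$ with $j(|z-w|)$ for $w$ in a small ball (monotonicity and doubling-type control of the decreasing kernel $j$) to dominate by an $\varepsilon$-uniform integrable majorant, so dominated convergence applies; alternatively one can use $L^1$-convergence in the weighted space directly. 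The main obstacle is this last tail estimate: controlling $\int_{\overline{A}^c} h_\varepsilon(z) P_A(x,z)\,dz$ uniformly in $\varepsilon$, which requires knowing that convolving against $\rho_\varepsilon$ does not destroy the $(1\wedge j(|\cdot|))$-integrability — this is where one leans on regularity of $j$ (continuity, monotonicity, and the doubling behavior implied by \eqref{eq:j} near $0$ and the mere local integrability of $j$ away from $0$) to get $(|h|*\rho_\varepsilon)\cdot(1\wedge j) \lesssim $ an $\varepsilon$-independent $L^1$ function. Everything else is a routine application of Dynkin's formula and mollification.
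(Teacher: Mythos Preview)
Your overall strategy matches the paper's: mollify $h$ to $h_\varepsilon$, verify that $h_\varepsilon$ is distributionally $L$-superharmonic on a slightly shrunken domain, prove $h_\varepsilon \ge P_A h_\varepsilon$ for the smooth regularization, and then pass to the limit using continuity of $h$ on $\Omega$ for the near-boundary part of the Poisson integral and the convergence $h_\varepsilon\to h$ in $L^1(\R^d,(1\wedge j(|x|))dx)$ (cf.~\cite[Lemma~2.9]{GKL}) for the far field. The only substantive difference is how the smooth case is handled. The paper argues analytically: it sets $\widetilde h:=P_A h_\varepsilon$ on $A$ and $\widetilde h:=h_\varepsilon$ on $A^c$, uses a polar-set argument to show $\widetilde h$ is regular harmonic in $A$, invokes \cite[Lemmas~2.2, 3.1]{GKL} to get $\widetilde h\in C^2(A)$ with $L\widetilde h=0$ there, and then rules out a positive interior maximum of $u:=\widetilde h-h_\varepsilon$ by a direct computation of $Lu(x_0)$ from the integral representation \eqref{eq:L1}. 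Your Dynkin-formula route is the probabilistic counterpart; it has the mild advantage of sidestepping the discussion of irregular boundary points of $A$ that the paper needs for its harmonicity step.

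One point to tighten: $h_\varepsilon$ is \emph{not} bounded in general (only in $L^1(\R^d,(1\wedge j)\,dx)$), so the phrase ``$h_\varepsilon$ is bounded and smooth enough'' is inaccurate and Dynkin's formula does not apply out of the box. You should either multiply by a cut-off equal to $1$ on a large ball and pass to the limit, or argue directly that $h_\varepsilon(X_{t\wedge\tau_A})-\int_0^{t\wedge\tau_A}Lh_\varepsilon(X_s)\,ds$ is a uniformly integrable martingale: the integrand $Lh_\varepsilon$ is bounded on $A$, $\E_x\tau_A<\infty$, and $\E_x|h_\varepsilon(X_{\tau_A})|<\infty$ follows from $P_A(x,\cdot)\lesssim 1\wedge j(|\cdot|)$ together with the weighted-$L^1$ hypothesis---precisely the tail control you already invoke for the limit step. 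With this in place your argument goes through.
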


\proof Let $A\subset \Omega$ be a relatively compact open set. We first prove the claim for $h\in C^2(\Omega)\cap L^1(\R^d, (1\wedge j(|x|)dx))$. Therefore, $-Lh$ can be calculated pointwise, and the assumption \eqref{eq:h_assump} is equivalent to $-Lh(x)\ge 0$ for $x\in\Omega$.
Denote by $A^{\text{reg}}\subset A^c$ the set of all regular points for $A^c$, i.e. $x\in A^{\text{reg}}$ whenever $\mathbf P_x (\tau_A=0)=1$. Note that $(\overline{A})^c\subset A^{\text{reg}}$ and by \cite[Proposition II (3.3), p.80]{BG} $\partial A\setminus A^{\text{reg}}$ is a semi-polar set for $X$.  Since $X$ is symmetric and has a $\alpha$-potential density, by \cite[Proposition VI.4.10]{BG} semi-polar sets for $X$ are polar.  This implies that $\partial A\setminus A^{\text{reg}}$ is polar and therefore 
\begin{equation}\label{e:polar}
\mathbf P_x(X_{\tau_A}\in \partial A\setminus A^{\text{reg}})=0,\quad x\in\R^d.
\end{equation}
Modifying the approach in \cite[Lemma 3.2]{GKL}, set 
\begin{equation*}
\widetilde h(x)=\begin{cases}
P_{A}h(x), & x\in A,\\
h(x), & \text{otherwise}.
\end{cases}
\end{equation*}
Since $\widetilde h(x)=h(x)=P_A h(x)$ for all $x\in A^{\text{reg}}$, it follows from \eqref{e:polar} that for every $x\in A$
\begin{align*}
P_A \widetilde{h}(x)&=\mathbf E_x[\widetilde h(X_{\tau_A})]=\mathbf E_x[\widetilde h(X_{\tau_A});X_{\tau_A}\in A^{\text{reg}}]=\mathbf E_x[P_Ah(X_{\tau_A});X_{\tau_A}\in A^{\text{reg}}]\\
&=\mathbf E_x[P_Ah(X_{\tau_A})]=P_A h(x)=\widetilde h (x),
\end{align*}
so the function $\widetilde h$ is regular harmonic in $A$. Since 
$\widetilde h\in  L^1(\R^d,(1\wedge j(x))dx)$, by \cite[Lemma 2.2, Lemma 3.1]{GKL} $\widetilde h\in C^2(A)$ and $L \widetilde h(x)=0$, $x\in A$. Note that under \eqref{eq:H}, the L\' evy density $j$ from \eqref{eq:kernel} satisfies conditions \cite[(A), (1.2)]{GKL} assumed in \cite[Lemma 2.2]{GKL}, for details see \cite[Example 6.2]{GKL}. Set $u:=\widetilde h-h$. We have shown that $u\in C^2(A)$ and, by definition, $u\equiv 0$ on $A^c$. Since for every $x\in A$, $L u(x)=-L h(x)$, it follows that $Lu(x) \ge 0$ for $x\in A$. Assume that $\sup\{u(x):x\in A\}>0$. Since $u\in C_c(\Omega)$ and $u=0$ on $A^c$, the supremum is attained at some $x_0\in A$. It follows that
\begin{align*}
Lu(x_0)&=\int_{\R^d}(u(x)-u(x_0))j(|x-x_0|)\, dx=\\
&=\int_{A}(u(x)-u(x_0))j(|x-x_0|)\, dx-u(x_0)\int_{A^c}j(|x-x_0|)\, dx <0,
\end{align*}   
which contradicts the already established inequality $Lu(x_0)\ge 0$. Therefore, we get that $u\le 0$ on $A$, that is $h\ge P_Ah$ on $A$.  

For a general function $h\in L^1(\R^d, (1\wedge j(|x|)dx))\cap C(\Omega)$ satisfying \eqref{eq:h_assump} define $h_\varepsilon=\theta_\varepsilon\ast h$, where $\theta_\varepsilon$, $\varepsilon>0$, are standard mollifiers. Note that $h_\varepsilon$ satisfies \eqref{eq:h_assump} on $\Omega\setminus\Omega_\varepsilon$, since for $v\in C_c^\infty(\Omega\setminus\Omega_\varepsilon)$, $v\ge 0$, we have that $v_\varepsilon\in C_c^\infty(\Omega)$ and $v_\varepsilon\ge 0$, so 
\begin{align*}
\int_{\R^d} h_\varepsilon(x) L v(x)\, dx&= \int_{\rd} \int_{\rd} \theta_{\epsilon}(y) h(x-y) Lv(x) \, dx \, dy= \int_{\rd}   \int_{\rd} \theta_{\epsilon}(y)h(z) Lv(z+y) dz \, dy\\
&=\int_{\R^d} h(z) (\theta_\varepsilon\ast L v)(z)\, dz=\int_{\R^d} h(y) L v_\varepsilon(y)\, dy \le 0.
\end{align*}
Since $h_\varepsilon\in C^2(A)$ it follows from the first part of the proof that $h_\varepsilon\ge P_Ah_\varepsilon$ on $A$, for $\varepsilon>0$ such that $\overline{A}\subset \Omega\setminus\Omega_\varepsilon$. By \cite[Lemma 2.9]{GKL} we have that $h_\varepsilon\xrightarrow{\varepsilon\downarrow 0}h$ in $L^1(\R^d, (1\wedge j(|x|))dx)$, which together with continuity of $h$ on $\Omega$ implies that
\begin{align}\label{eq:instead_of_fatou}
P_A h(x)=\lim_{\varepsilon\downarrow 0} P_A h_\varepsilon(x)\le \lim_{\varepsilon\downarrow 0} h_\varepsilon(x)= h(x).
\end{align} 
To see that the first equality in \eqref{eq:instead_of_fatou} holds, note that for any relatively compact subset $U$ of $\Omega$ such that $\overline{A}\subset U$ we have 
\begin{align}\label{eq:integral}
   |P_A(h-h_\varepsilon)(x)|&\le \int_{U\setminus A}|h(z)-h_\varepsilon(z)|\omega_A(x,dz)+\int_{U^c}|h(z)-h_\varepsilon(z)|P_A(x,z)dz.    
\end{align}
Since $h\in C(\Omega)$, we have that $h_\varepsilon\xrightarrow{\varepsilon\downarrow 0}h$ pointwise on $\Omega$. Moreover, $|h|, |h_\varepsilon|\le M$ on $U$ for some $M>0$ and all $\varepsilon>0$ small enough. Therefore, by the dominated convergence theorem, the first integral in \eqref{eq:integral} converges to zero as $\varepsilon\to 0$. For the second integral in \eqref{eq:integral} note that
\begin{equation*}
P_A(x,z)\lesssim 1\wedge j(|z|), \ x\in A,\  z\in U^c,
\end{equation*}
so the desired result follows from $h_\varepsilon\xrightarrow{\varepsilon\downarrow 0}h$ in $L^1(\R^d, (1\wedge j(|x|))dx)$.
\qed

\begin{remark}\label{r:superharmonic}
\begin{enumerate}
    \item The analogous statement of the Lemma \ref{l:superharmonic} for a subharmonic function $h$ is obtained by applying Lemma \ref{l:superharmonic} to the function $-h$.
    \item The statement of Lemma \ref{l:superharmonic} for nonnegative or locally bounded supersolutions holds without the continuity assumption when $A$ is a Lipschitz set, or more generally, $A\subset\R^d$ such that $\omega_A(x,\partial A)=0$. Indeed, for the convergence of the first integral in \eqref{eq:instead_of_fatou}, one can apply Fatou's lemma in the first case or the dominated convergence theorem in the second, since $h_\varepsilon\xrightarrow{\varepsilon\to 0} h$ a.e.~on $\R^d$ and $\omega_A(x,dz)=P_A(x,z)dz$. 
\end{enumerate}
\end{remark}

\begin{lemma}\label{l:1}
If a nonnegative function $u\in L^1(\R^d, (1\wedge j(|x|))dx)\cap C(\Omega)$ is a distributional supersolution of 
\begin{equation}\label{e:11}
\begin{array}{rcll}
		-Lu&=& -f(u)& \quad \text{in } \Omega
	\end{array}
\end{equation}
then  for every relatively compact open set $A\subset \Omega$ it holds that
$$
u \ge -G_A f_u + P_A u \text{ a.e}.
$$
\end{lemma}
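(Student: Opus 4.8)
The plan is to reduce Lemma \ref{l:1} to the already-established Lemma \ref{l:superharmonic} by introducing the auxiliary function $h := u + G_A f_u$, where $f_u := f(u)$, and showing that $h$ satisfies the hypothesis \eqref{eq:h_assump} of Lemma \ref{l:superharmonic} on $A$. First I would record the regularity bookkeeping: since $u$ is nonnegative and $f$ is increasing with $f(0)=0$, the function $f_u = f(u)$ is nonnegative and locally bounded on $\Omega$, hence $G_A f_u$ is a well-defined nonnegative Green potential on $A$; in fact $G_A f_u \in L^1(A)$ because $f_u$ is bounded on $\overline{A}$ (a relatively compact subset of $\Omega$ where $u$ is continuous) and $G_A(\cdot,\cdot)$ is integrable. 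One should also check $h \in L^1(\R^d,(1\wedge j(|x|))dx)\cap C(A)$: outside $A$, $h=u$ which is already in that space; inside $A$, $G_A f_u$ is continuous (standard potential-theoretic fact, or cite the regularity in \cite{GKL}), and bounded, so the membership passes through.

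Next I would verify the key identity $-L(G_A f_u) = f_u$ in the distributional sense on $A$: for any $v\in C_c^\infty(A)$, $v\ge0$, by Fubini
\[
\int_{\R^d} G_A f_u(x)\,(-Lv)(x)\,dx = \int_A f_u(y)\Big(\int_{\R^d} G_A(x,y)(-Lv)(x)\,dx\Big)dy = \int_A f_u(y) v(y)\,dy,
\]
using the defining property of the Green function that $-L G_A v = v$ on $A$ for test functions $v$ supported in $A$ (equivalently, $G_A$ is the inverse of $-L$ with Dirichlet exterior condition), together with symmetry $G_A(x,y)=G_A(y,x)$. Combining this with the hypothesis that $u$ is a distributional supersolution of \eqref{e:11}, i.e. $\int u(-Lv) \ge -\int f_u v = -\int f(u)v$ for all such $v$, I get
\[
\int_{\R^d} h(x)(-Lv)(x)\,dx = \int u(-Lv) + \int G_A f_u(-Lv) \ge -\int f_u v + \int f_u v = 0,
\]
so $h$ satisfies \eqref{eq:h_assump} on $A$. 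Here I would be slightly careful: Lemma \ref{l:superharmonic} is stated with test functions in $C_c^\infty(\Omega)$ and continuity on $\Omega$, whereas $h$ is only continuous on $A$; the cleanest fix is to apply Lemma \ref{l:superharmonic} with $\Omega$ replaced by $A$ itself (the statement and proof only use the open set on which superharmonicity is tested), or alternatively to invoke Remark \ref{r:superharmonic}(ii) after passing to a slightly smaller relatively compact open set and using that $h$ is locally bounded there.

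Applying Lemma \ref{l:superharmonic} to $h$ on (a relatively compact open set inside) $A$ — more precisely, applying it to $h$ with the role of $\Omega$ played by $A$ and the role of the inner set played by $A'$ for relatively compact $A'\Subset A$, then letting $A'\uparrow A$ — yields $h \ge P_{A'} h$, and in the limit $h \ge P_A h$ on $A$; off $A$ this is trivial since $P_A h = h$ there by regularity (modulo the polar set, handled as in the proof of Lemma \ref{l:superharmonic}). Now $P_A h = P_A u + P_A(G_A f_u)$, and since $G_A f_u$ vanishes on $A^c$ (it is a Green potential for $A$), its harmonic extension satisfies $P_A(G_A f_u) = \E_\cdot[G_A f_u(X_{\tau_A})] = 0$. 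Hence $h \ge P_A u$, i.e. $u + G_A f_u \ge P_A u$, which rearranges to $u \ge -G_A f_u + P_A u$ a.e. on $\R^d$.

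The main obstacle I anticipate is the mismatch in hypotheses between Lemma \ref{l:1} (where $u$ is merely a distributional supersolution, only continuous on $\Omega$, not necessarily bounded near $\partial\Omega$) and Lemma \ref{l:superharmonic} (stated for $h\in L^1(\R^d,(1\wedge j)dx)\cap C(\Omega)$). The fix is genuinely to work on the relatively compact $A$: on $\overline{A}\subset\Omega$ everything ($u$, $f(u)$, $G_Af_u$) is bounded and continuous, so the auxiliary $h$ lives comfortably in the function class required, and the mollification argument of Lemma \ref{l:superharmonic} can be run verbatim on a neighborhood of $\overline A$ inside $\Omega$. A secondary point requiring a line of justification is the distributional identity $-L(G_A f_u)=f_u$ on $A$ when $f_u$ is only bounded rather than smooth; this follows from the standard fact that $G_A$ maps bounded functions on $A$ to (Dirichlet) solutions, or by another mollification/monotone approximation of $f_u$ and the self-adjointness of $G_A$, so it is routine but should be cited (e.g. via the potential theory in \cite{GKL} or \cite{BVW}). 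With these two points dispatched, the rest is bookkeeping.
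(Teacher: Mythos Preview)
Your approach is essentially identical to the paper's: define $h=u+G_A f_u$, verify that $-L(G_A f_u)=f_u$ distributionally on $A$ so that $h$ satisfies \eqref{eq:h_assump}, apply Lemma~\ref{l:superharmonic} (with $\Omega$ replaced by $A$, via Remark~\ref{r:superharmonic}) to get $h\ge P_A h$, and finish by noting $P_A h=P_A u$ since $G_A f_u$ vanishes on $A^c$. You have even correctly anticipated the domain mismatch and its resolution; the paper handles it by citing Remark~\ref{r:superharmonic} directly rather than the explicit exhaustion $A'\uparrow A$.
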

\proof Let $A$ be a relatively compact open set in $\Omega$. Define $h(x):=u(x)+G_A f_u (x)$, $x\in \R^d$. Note that $u$ is bounded on $A$, hence $f_u$ is also bounded on $A$. 
By combining the proofs of \cite[Lemma 2.2 and Lemma 2.4]{BVW}) we conclude that $G_A f_u \in C(A)$ \footnote{Note that the regularity of the set $A$ in \cite{BVW} is needed only to show that $G_A f_u$ continuously vanishes at $\partial A$.}, which implies that $h\in C(A)$. Further, since $G_A f_u=0$ on $\Omega\setminus A$, we have that $u=h$ on $\R^d\setminus A$. This implies that $P_A u=P_A h$.  Note that on $A$ we have that
\begin{equation*}
-Lh = -L u -L G_A f_u =-L u + f_u
\end{equation*}
in the distributional sense.  Hence, $u$ is a supersolution for \eqref{e:11} if and only if $h$ is superharmonic, that is satisfies \eqref{eq:h_assump}. By Remark \ref{r:superharmonic} it follows that $P_A h(x)\le h(x)$ for almost every $x\in A$ (hence for a.e. $x\in \R^d$). We get that if $u$ is a supersolution for \eqref{e:11}, then
\[
P_A u(x)=P_A h(x) \le h(x)=u(x)+G_A f_u (x),\qquad x\in \Omega.
\] 
\qed

The analogous result to Lemma \ref{l:1} holds for distributional subsolutions to \eqref{e:11}, with the converse inequality. This argument, combined with \cite[Lemma 3.1]{GKL}, implies the following characterization.

\begin{corollary}
    A function $u\in  L^1(\R^d, (1\wedge j(|x|))dx)\cap C(\Omega)$ is a distributional solution of \eqref{e:11}  if and only if for every relatively compact open set $A\subset \Omega$ it holds that 
\begin{equation}\label{e:3}
u = -G_A f_u+P_A u.
\end{equation}
\end{corollary}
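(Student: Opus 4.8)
The plan is to prove the corollary by combining Lemma~\ref{l:1}, its subsolution analogue, and the converse direction, which is where \cite[Lemma 3.1]{GKL} enters. The statement to be proved is a biconditional, so I would split it into two implications.

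\textbf{From distributional solution to the integral identity.} Suppose $u\in L^1(\R^d,(1\wedge j(|x|))dx)\cap C(\Omega)$ is a distributional solution of \eqref{e:11}, and fix a relatively compact open set $A\subset\Omega$. Then $u$ is simultaneously a distributional supersolution and a distributional subsolution of \eqref{e:11}. Applying Lemma~\ref{l:1} gives $u\ge -G_Af_u+P_Au$ a.e.\ on $\R^d$. Applying the subsolution analogue of Lemma~\ref{l:1} (obtained, as remarked after that lemma, by running the same argument with the reversed inequality, or equivalently by applying Lemma~\ref{l:superharmonic}(i) to $-h$ where $h=u+G_Af_u$) gives $u\le -G_Af_u+P_Au$ a.e. Together these yield the identity \eqref{e:3} a.e. Since both sides of \eqref{e:3} should then be upgraded to genuine functions: $f_u$ is bounded on $A$ because $u\in C(\Omega)$ is locally bounded, so $G_Af_u\in C(A)$ by the argument cited in the proof of Lemma~\ref{l:1}, and $P_Au$ is well-defined pointwise because $u\in L^1(\R^d,(1\wedge j(|x|))dx)$ controls the integrability against the Poisson kernel; hence the a.e.\ identity actually holds for every $x\in\R^d$, matching the form stated. (If one only wants the a.e.\ statement, this last sentence can be dropped.)

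\textbf{From the integral identity to distributional solution.} Conversely, suppose \eqref{e:3} holds for every relatively compact open $A\subset\Omega$. Here I would invoke \cite[Lemma 3.1]{GKL}: the relation $u=-G_Af_u+P_Au$ on $A$ says precisely that the function $h:=u+G_Af_u$ is (regular) harmonic in $A$ for $L$, since $P_Ah=P_Au$ (because $G_Af_u$ vanishes outside $A$) and $P_Ah=h$ on $A$ is exactly \eqref{e:3}. By \cite[Lemma 3.1]{GKL} a regular harmonic function that lies in $L^1(\R^d,(1\wedge j(|x|))dx)$ is $C^2$ on $A$ and satisfies $Lh=0$ pointwise on $A$, hence also in the distributional sense against test functions supported in $A$. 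Therefore $-Lu=-Lh+LG_Af_u=0+(-f_u)=-f(u)$ distributionally on $A$; letting $A$ exhaust $\Omega$ shows $-Lu=-f(u)$ distributionally on all of $\Omega$, i.e.\ $u$ is a distributional solution of \eqref{e:11}. Note $LG_Af_u=-f_u$ on $A$ in the distributional sense is the identity already used in the proof of Lemma~\ref{l:1}.

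\textbf{Main obstacle.} The two implications are both essentially bookkeeping built on the machinery already assembled (Lemma~\ref{l:superharmonic}, Lemma~\ref{l:1}, and the regularity theory of \cite{GKL}), so there is no deep new difficulty. The one point requiring care is the passage between the a.e.\ statements produced by Lemma~\ref{l:1} and its analogue and a pointwise identity, and symmetrically, in the converse direction, making sure that the pointwise/distributional equivalence for $Lh=0$ is legitimate — this is exactly what \cite[Lemma 3.1]{GKL} provides, together with the observation (already noted in the proof of Lemma~\ref{l:superharmonic}) that under \eqref{eq:H} the kernel $j$ satisfies the hypotheses of the results in \cite{GKL}. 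One should also keep track of the fact that $h$ and $u$ differ only on $A$, so that $P_Au=P_Ah$; this is what makes the harmonicity of $h$ on $A$ equivalent to the integral identity \eqref{e:3} for $u$.
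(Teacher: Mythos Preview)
Your proposal is correct and follows exactly the route the paper sketches: the forward implication combines Lemma~\ref{l:1} with its subsolution analogue, and the converse direction sets $h=u+G_Af_u$, recognizes it as regular harmonic via \eqref{e:3}, and invokes \cite[Lemma~3.1]{GKL} to obtain $Lh=0$ and hence $-Lu=-f(u)$ distributionally. The additional care you take about the a.e.\ versus pointwise identity and about exhausting $\Omega$ are reasonable elaborations of the paper's terse statement.
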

Finally, as an immediate consequence of Lemma \ref{l:1} we get a version of the comparison principle for distributional super- and sub-solutions.

\begin{lemma}\label{lem:comparison}
  Let $B$ be a relatively compact open set in $\Omega$ and $u_1, u_2\in  L^1(\R^d, (1\wedge j(|x|))dx)\cap C(\Omega)$ such that $u_1$ is a supersolution and $u_2$ a subsolution to \eqref{e:11}. If $u_1\ge u_2$ on $\R^d\setminus B$ then $u_1\ge u_2$ on $\Omega$. 
 \end{lemma}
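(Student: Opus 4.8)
The plan is to derive this comparison principle directly from Lemma \ref{l:1} and its subsolution analogue. Fix a relatively compact open set $B$ in $\Omega$ with $u_1\ge u_2$ on $\R^d\setminus B$, and set $w:=u_2-u_1$. Then $w\le 0$ on $\R^d\setminus B$, and $w$ is continuous on $\Omega$ and lies in $L^1(\R^d,(1\wedge j(|x|))dx)$. The key observation is that on $B$ we have, by Lemma \ref{l:1} applied to $u_1$ (a supersolution) and the analogous statement applied to $u_2$ (a subsolution), that for every relatively compact open $A\subset B$ (hence $A\subset\Omega$),
\[
u_2\le -G_A f_{u_2}+P_A u_2,\qquad u_1\ge -G_A f_{u_1}+P_A u_1\quad\text{a.e. on }A.
\]
Subtracting gives $w\le -G_A(f_{u_2}-f_{u_1})+P_A w$ a.e. on $A$.

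Next I would argue by contradiction. Suppose $\sup_{\Omega} w>0$; since $w\le 0$ on $\R^d\setminus B$ and $\overline{B}$ is compact with $w$ continuous on a neighbourhood of $\overline B$ (note $\overline B\subset\Omega$), the supremum is attained at some $x_0\in B$ with $w(x_0)=:s_0>0$. Choose a relatively compact open set $A$ with $x_0\in A\subset\overline A\subset B$. On the set $\{w>0\}$ we have $u_2>u_1\ge 0$, and since $f$ is nondecreasing, $f_{u_2}-f_{u_1}=f(u_2)-f(u_1)\ge 0$ there; on $\{w\le 0\}$ we still have $f(u_2)-f(u_1)\le 0$. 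In either case $G_A(f_{u_2}-f_{u_1})(x)\ge 0$ is not guaranteed pointwise, so instead I use the inequality in integrated-against-harmonic-measure form: from $w\le -G_A(f_{u_2}-f_{u_1})+P_A w$ a.e., and the fact that $G_A\ge 0$ while $f_{u_2}-f_{u_1}\ge 0$ wherever $w>0$, one obtains that at a.e. point of $A$ where $w$ is close to its maximum, $w\le P_A w$. Taking a sequence $A_n\downarrow\{x_0\}$ and using $P_{A_n}w(x_0)\to w(x_0)$ (continuity of $w$, together with $\omega^{x_0}_{A_n}$ concentrating near $x_0$ as the sets shrink, as in the maximum-principle argument inside the proof of Lemma \ref{l:superharmonic}) forces $w(x_0)\le P_{A_n}w(x_0)$ for each $n$; but $P_{A_n}w(x_0)=\E_{x_0}[w(X_{\tau_{A_n}})]$, and since $X$ jumps, with positive probability $X_{\tau_{A_n}}$ lands in $B^c\cup\{w<s_0\}$, so $P_{A_n}w(x_0)<s_0$, a contradiction.

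More cleanly, I would package the last step as a strong maximum principle: the function $h:=w+G_A f_{u_1}-G_A f_{u_2}$ satisfies, on $A$, $-Lh = -Lu_2+Lu_1\le f(u_1)-f(u_2)\le 0$ in the distributional sense on the region where $w\ge 0$, hence $h$ is subharmonic there and, being continuous, attains its max on $\R^d\setminus A$ by the argument in the proof of Lemma \ref{l:superharmonic}; tracing back through $G_A f_{u_i}\ge 0$ and $f(u_2)\ge f(u_1)$ on $\{w>0\}$ shows $w$ cannot have an interior positive maximum. Therefore $w\le 0$ on $\Omega$, i.e. $u_1\ge u_2$ on $\Omega$, as claimed.

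The main obstacle is handling the sign of the Green-potential term $G_A(f_{u_2}-f_{u_1})$: it is only nonnegative on the set where $w>0$, so one cannot simply drop it globally. The resolution is to localize the contradiction at an interior positive maximum point $x_0$, where in a small neighbourhood $u_2>u_1$ and hence $f(u_2)\ge f(u_1)$, making the potential term have the favourable sign exactly where it is needed; combined with the fact that $X$ is a pure-jump process (so the harmonic measure $\omega^{x_0}_A$ always charges $A^c$, including the region where $w<s_0$), this yields the strict inequality $P_A w(x_0)<w(x_0)$ and the contradiction. Care is also needed because Lemma \ref{l:1} gives the inequality only \emph{a.e.}; this is why I pass through the superharmonicity of an auxiliary continuous function and invoke the continuous-version maximum principle already established in the proof of Lemma \ref{l:superharmonic}, rather than working with the a.e. inequality directly.
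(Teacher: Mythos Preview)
Your argument is essentially correct, but you have made it much harder than necessary by missing the one clean choice of $A$. The paper applies Lemma~\ref{l:1} (and its subsolution analogue) with the \emph{single} set
\[
A:=\{x\in\Omega:\,u_2(x)>u_1(x)\},
\]
which is open by continuity and satisfies $\overline A\subset\overline B\subset\Omega$. On $A$ one has $u_2>u_1$, so $f(u_2)\ge f(u_1)$ and the Green term has the right sign \emph{everywhere} on $A$; on $A^c$ one has $u_2\le u_1$, so the Poisson term has the right sign as well. The chain
\[
u_2\le -G_Af_{u_2}+P_Au_2\le -G_Af_{u_1}+P_Au_1\le u_1\qquad\text{a.e.\ on }A
\]
then contradicts the definition of $A$ (continuity upgrades ``a.e.'' to ``everywhere''), so $A=\emptyset$. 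No maximum point, no shrinking sequence, no jump argument is needed.

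Your route---localising at an interior positive maximum $x_0$, choosing a small $A\subset\{w>0\}$, and using that the harmonic measure $\omega^{x_0}_A$ charges $B^c$ (where $w\le 0$) to get $P_Aw(x_0)<s_0$---does work, but the shrinking sequence $A_n\downarrow\{x_0\}$ and the convergence $P_{A_n}w(x_0)\to w(x_0)$ are red herrings: a single $A$ with $\overline A\subset\{w>0\}$ already gives $w(x_0)\le P_Aw(x_0)<s_0$. Also, in your ``cleaner'' alternative the computation of $-Lh$ drops the $f_{u_1}-f_{u_2}$ contribution from $-L(G_Af_{u_1}-G_Af_{u_2})$; the conclusion $-Lh\le 0$ on $\{w>0\}$ is still correct, but the displayed identity is not. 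The comparison with the paper's proof is simply that the choice $A=\{u_2>u_1\}$ makes the ``main obstacle'' you identify disappear entirely.
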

 \proof  
Let $A:=\{x\in\Omega:u_2(x)>u_1(x)\}$. Since both $u_1$ and $u_2$ are continuous on $\Omega$, $A$ is open and $A\subset\overline{A}\subset  B\subset \Omega$. Hence, by Lemma \ref{l:1}, for any $x\in A$
\begin{align}
u_2(x)& \le -G_A f_{u_2}(x)+ P_A(u_2 )(x) =-\int_A G_A(x,y)f(u_2(y))dy +\int_{A^c}P_A(x,y)u_2(y) dy \nonumber\\
&\le -\int_A G_A(x,y)f(u_1(y))dy +\int_{A^c}P_A(x,y)u_1(y) dy\le u_1(x).\label{e:max}
\end{align}
In the inequality we used that $f(u_2(y))\ge f(u_1(y))$ for $y\in A$, and $u_2(y)\le u_1(y)$ for $y\in A^c$. The above proves that $A=\emptyset$, that is, $u_1\ge u_2$ on all of $\Omega$.
\qed

\section{Existence of a large solution }

Now using the existence of a supersolution $\overline{u}$ from Lemma \ref{l:supersolution} and the one-sided iteration argument, we arrive to the existence result for solutions to \eqref{eq:problem}.

\begin{thm}\label{thm:solution}
Let $f$ be an increasing $C^1$ function such that $f(0)=0$ satisfying conditions \eqref{eq:f1}, \eqref{eq:KO_refined}, \eqref{eq:growth}, and
\begin{equation}\label{e:int-criterion}
	\int_{0}^1 V(t)f\left(\frac{V(t)}{t}\right)dt<\infty.
\end{equation}
 Then there exists a nonnegative solution $u\in L^1(\R^n)\cap C(\Omega)$ to \eqref{eq:problem} dominated by $\overline{u}$ on $\Omega$.
\end{thm}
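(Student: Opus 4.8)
The plan is to obtain $u$ as the increasing limit of the moderate solutions $u_k$ to the approximating problems \eqref{e:uk}, and to pass to the limit in their weak dual reformulation $u_k=-G_\Omega f(u_k)+\mathfrak h_k$, where $\mathfrak h_k$ is the $L$-harmonic part of \eqref{e:uk}, chosen so that $\mathfrak h_k\uparrow$ and $\liminf_{x\to\partial\Omega}V^*(\delta(x))\mathfrak h_k(x)=c_k\uparrow\infty$. First I would fix, for each $k\in\N$, a nonnegative weak dual solution $u_k$ of \eqref{e:uk}; its existence, continuity on $\Omega$, and membership in $L^1(\R^d,(1\wedge j(|x|))dx)$ come from the theory of moderate solutions (cf.\ \cite{BVW,BJK}). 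Here \eqref{e:int-criterion} is exactly the integrability needed to make $G_\Omega f(u_k)\in L^1(\Omega)$: using $G_\Omega 1(y)\asymp V(\delta(y))$ one has $\|G_\Omega f(u_k)\|_{L^1(\Omega)}\asymp\int_\Omega f(u_k)\,V(\delta)$, which is finite because a moderate solution satisfies $u_k\le C_k/V^*(\delta)\asymp C_kV(\delta)/\delta$ and $f$ obeys \eqref{eq:f1}.

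Next I would establish $0\le u_k\le u_{k+1}\le\overline u$ on $\Omega$ via the comparison principle, Lemma~\ref{lem:comparison}. For $u_k\le\overline u$: $u_k$ is a solution and $\overline u$ is a supersolution by Lemma~\ref{l:supersolution}, both vanish on $\Omega^c$, and near $\partial\Omega$ we have $\overline u\ge aU=a\psi(V(\delta))$ with $V^*(\delta)\psi(V(\delta))\to\infty$ (Lemma~\ref{lem:boundary_rate}, i.e.\ \eqref{eq:growth}), while $u_k\le C_k/V^*(\delta)$; hence $\overline u\ge u_k$ on some $\Omega_{\eta_k}$, and Lemma~\ref{lem:comparison} with the relatively compact open set $B=\{x\in\Omega:\delta(x)>\eta_k/2\}$ gives $\overline u\ge u_k$ on all of $\Omega$. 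For $u_k\le u_{k+1}$: both are solutions, hence sub- and supersolutions, they agree on $\Omega^c$, and they are ordered near $\partial\Omega$ since the data of \eqref{e:uk} increase in $k$; the same application of Lemma~\ref{lem:comparison} applies. Setting $u:=\sup_k u_k=\lim_k u_k$ I then get $0\le u\le\overline u<\infty$ on $\Omega$, $u\equiv 0$ on $\Omega^c$, and $u\in L^1(\R^d,(1\wedge j(|x|))dx)$, since $\overline u=aU+bG_\Omega 1\in L^1(\Omega)$ by Lemma~\ref{lem:integrability} (note \eqref{eq:KO} follows from \eqref{eq:KO_refined}).

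To see that $u$ solves $-Lu=-f(u)$ distributionally I would use the characterisation \eqref{e:3}: each $u_k$ satisfies $u_k=-G_Af(u_k)+P_Au_k$ for every relatively compact open $A\subset\Omega$. Fixing such an $A$, the $u_k$ are uniformly bounded on $\overline A$ by $\|\overline u\|_{L^\infty(\overline A)}$, so $f(u_k)\uparrow f(u)$ is bounded on $A$; monotone convergence then gives $G_Af(u_k)\uparrow G_Af(u)<\infty$ and $P_Au_k\uparrow P_Au<\infty$ (the latter finite because $u\le\overline u\in L^1(\R^d,(1\wedge j(|x|))dx)$ and $P_A(x,\cdot)\lesssim 1\wedge j(|\cdot|)$ away from $\overline A$), whence $u=-G_Af(u)+P_Au$ for every such $A$; in particular $u\in C(\Omega)$, and by \eqref{e:3} again $u$ is a distributional solution of $-Lu=-f(u)$ in $\Omega$. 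The boundary condition is read off from $u\ge u_k$: writing $u_k=\mathfrak h_k-G_\Omega f(u_k)$, we have $\liminf_{x\to\partial\Omega}V^*(\delta(x))\mathfrak h_k(x)=c_k\uparrow\infty$, while $u_k\le C_k/V^*(\delta)$, \eqref{eq:f1} and the sharp boundary Green estimates for $G_\Omega$ — again through \eqref{e:int-criterion} — force $V^*(\delta(x))\,G_\Omega f(u_k)(x)\to0$ as $x\to\partial\Omega$; hence $\liminf_{x\to\partial\Omega}V^*(\delta(x))u_k(x)\ge c_k$ for all $k$, so $\lim_{x\to\partial\Omega}V^*(\delta(x))u(x)=\infty$.

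The main obstacle I expect is this last step: ensuring that the beyond-moderate blow-up survives the limit, i.e.\ that the nonlinear Green potentials $G_\Omega f(u_k)$ stay of strictly lower order than $1/V^*(\delta)=\delta/V(\delta)$ at $\partial\Omega$. This is precisely where \eqref{e:int-criterion} — an assumption genuinely stronger than \eqref{eq:KO_refined}, which only yields the supersolution and the a priori bound $u\le\overline u$ — is used, through $G_\Omega 1\asymp V(\delta)$ and the boundary Green function bounds. A secondary, more routine, difficulty is fitting the hypotheses of Lemma~\ref{lem:comparison} in the monotonicity and domination steps: the exceptional relatively compact set must be chosen so that $\overline u$ (respectively $u_{k+1}$) dominates $u_k$ outside it, which is possible only because $\overline u$ blows up strictly faster than any moderate solution, by Lemma~\ref{lem:boundary_rate}.
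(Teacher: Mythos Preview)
Your outline is correct and matches the paper's strategy: build the moderate solutions $u_k$ to \eqref{e:uk}, trap them between $0$ and $\overline u$ via Lemma~\ref{lem:comparison}, pass to the monotone limit in the local representation \eqref{e:3}, and read off the boundary blow-up from $u\ge u_k$ together with the estimate $V^*(\delta)\,G_\Omega f(u_k)\to 0$ coming from \eqref{e:int-criterion} (this is exactly the paper's \eqref{e:4}).

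One point deserves tightening. Your claim that $u_k\le u_{k+1}$ near $\partial\Omega$ ``since the data increase in $k$'' is not automatic: from \eqref{e:martin_estimate} one only has $c_1^{-1}\le V^*(\delta)M_\Omega 1\le c_1$, so comparing $\limsup V^*(\delta)u_k$ with $\liminf V^*(\delta)u_{k+1}$ fails once $k$ is large. The paper circumvents this by extracting a geometric subsequence $k_n=(\lfloor 2c_1^2\rfloor+1)^n$, so that $c_1^{-1}k_{n+1}>c_1 k_n$ forces $u_{k_{n+1}}>u_{k_n}$ on some $\Omega_\eta$, whence Lemma~\ref{lem:comparison} applies. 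Your full-sequence monotonicity can in fact be rescued, but only by invoking the \emph{same} Green-potential boundary estimate you flag as the ``main obstacle'': writing $u_{k+1}-u_k=M_\Omega 1-\big(G_\Omega f(u_{k+1})-G_\Omega f(u_k)\big)$ and using $V^*(\delta)M_\Omega 1\ge c_1^{-1}$ together with $V^*(\delta)G_\Omega f(u_j)\to 0$ for $j=k,k+1$ gives $u_{k+1}>u_k$ near $\partial\Omega$. So the estimate from \eqref{e:int-criterion} is needed already at the monotonicity step, not only at the end. The paper also proves $u\in C(\Omega)$ by showing uniform convergence of $u_{k_n}$ on compacts; your route via $u=-G_Af(u)+P_Au$ works as well, since $f(u)\le f(\overline u)$ is locally bounded and hence $G_Af(u)\in C(A)$.
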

\proof
For $k\in\mathbf N$ let $u_k=-G_\Omega f_{u_k}+kM_\Omega1$ be the nonnegative unique weak dual solution to the semilinear problem 
\begin{equation}\label{e:uk}
\begin{array}{rcll}
		-Lu&=& -f(u)& \quad \text{in } \Omega\\
		u&=&0& \quad \text{in }\Omega^c\\
		W_\Omega u&=&k&\quad \text{on }\partial\Omega.
	\end{array}
\end{equation}
The existence of this weak dual solution follows from \cite[Theorem 5.1]{BVW} and assumptions \eqref{e:int-criterion} and \eqref{eq:f1}, latter implying that the function $\Lambda=f$ satisfies the doubling condition \cite[(5.1)]{BVW}. 

First, we show that the sequence $(u_k)_k$ has a nondecreasing subsequence $(u_{k_n})_n$. By \eqref{eq:H1} and \eqref{eq:f1} the function $h(t):= f(\tfrac{V(t)}t)$ satisfies \cite[Condition \textbf{(U)}]{BVW}. Recall that by \eqref{e:martin_estimate} there exists some $c_1>1$ such that for all $x\in\Omega$
\begin{equation*}
c_1^{-1} \le \frac{\delta_\Omega(x)}{V(\delta_\Omega(x))} M_\Omega 1 (x) \le c_1.
\end{equation*}
Since $f$ is increasing, nonnegative and satisfies \eqref{eq:f1}, there exists $c_2>0$ such that $f(c_1 t)\le c_2 f(t)$, $t\ge 0$, and therefore
$$
G_\Omega f(u_k)\leq  G_\Omega f(k M_\Omega1)\le k^{1+M}G_{\Omega}f(M_{\Omega}1)
\leq c_2 k^{1+M}G_\Omega(h\circ\delta_\Omega). 
$$
Therefore, by \cite[Proposition 4.1]{BVW} and \eqref{e:int-criterion} we have that for some $c_3>0$
\begin{align}\label{e:4}
0&\leq\limsup_{x\to\partial\Omega} \frac{\delta_\Omega(x)}{V(\delta_\Omega(x))}G_\Omega f_{u_k}(x)\leq c_2  k^{1+M} \limsup_{x\to\partial\Omega} \frac{\delta_\Omega(x)}{V(\delta_\Omega(x))}G_\Omega(h\circ\delta_\Omega)(x)\nonumber \\
&\leq c_3 k^{1+M} \left(\lim_{x\to\partial\Omega}\int_0^{\delta_\Omega(x)}h(t)V(t)\,\dt+\lim_{x\to\partial\Omega}\delta_\Omega(x)\int_{\delta_\Omega(x)}^{\text{diam}(\Omega)} \frac{h(t)V(t)}t\,\dt\right)=0.
\end{align}
Note that the last equality in \eqref{e:4} follows from \eqref{e:int-criterion} since for $g(t):=f\left(\frac{V(t)}{t}\right) V(t)$ we can apply the dominated convergence theorem to get that
$$
\lim_{\eta\downarrow 0} \eta\int_{\eta}^1 \frac{ g(t)}{t}\, dt= \int_{0}^1 \lim_{\eta\downarrow 0}\frac{ \eta}{t}g(t)1_{[\eta,1]}(t)\, dt=0.
$$
Now by \eqref{e:martin_estimate} and \eqref{e:4} it follows that for $l\ge 2c_1^2k$
\begin{equation}\label{eq:subseq}
\begin{split}
&0<\limsup_{\delta_\Omega(x)\to 0} \frac{\delta_\Omega(x)}{V(\delta_\Omega(x))} u_k(x)=\limsup_{\delta_\Omega(x)\to 0} \frac{\delta_\Omega(x)}{V(\delta_\Omega(x))} (k M_\Omega 1 (x) )\le c_1k \\
&\le c_1^{-1}2^{-1}l \le 2^{-1} \liminf_{\delta_\Omega(x)\to 0} \frac{\delta_\Omega(x)}{V(\delta_\Omega(x))} (l M_\Omega 1(x))\leq 2^{-1}\liminf_{\delta_\Omega(x)\to 0} \frac{\delta_\Omega(x)}{V(\delta_\Omega(x))}u_l(x)<\infty,
\end{split}
\end{equation}
which implies that $u_l>u_k$ on $\Omega_\eta$ for some $\eta$ small enough. Then by Lemma \ref{lem:comparison} it follows that $u_l\ge u_k$ on all of $\Omega$. Now the desired subsequence $(u_{k_n})_n$ is obtained by setting $k_n=(\lfloor 2c_1^2\rfloor +1)^n$.

 Let $\overline{u}\in C(\Omega)$ be the pointwise supersolution of \eqref{e:11} from Lemma \ref{l:supersolution}. Since $G_\Omega 1\in C_0(\Omega)$, by Lemma \ref{lem:integrability} and Lemma \ref{lem:boundary_rate}, $\overline{u}\in L^1(\Omega)$ and 
\begin{equation}\label{e:boundary_supersolution}
    \lim_{x\to \partial\Omega}\frac{\delta_\Omega(x)}{V(\delta_\Omega(x))}\overline{u}(x)=\infty .
\end{equation}
This together with \eqref{eq:subseq} implies that $\overline{u}> u_{k_n}$ near the boundary of $\Omega$, so by Lemma \ref{lem:comparison} we conclude that 
\begin{equation}\label{e:u_k}
\overline{u}\ge u_{k_n},\quad  \text{in }\Omega.
\end{equation}
For $x\in \Omega$ define
$$
u(x):=\lim_{n\to \infty}u_{k_n}(x) \le \overline{u}(x),
$$
and set $u=0$ on $\Omega^c$. Note that $u\in L^1(\Omega)$. Moreover, for all $k_n$,
$$
\liminf_{x\to \partial\Omega}\frac{\delta_\Omega(x)}{V(\delta_\Omega(x))}u(x)\ge \liminf_{x\to \partial\Omega}\frac{\delta_\Omega(x)}{V(\delta_\Omega(x))}u_{k_n}(x)\ge C^{-1} k_n.
$$

Thus $u$ has the required boundary behaviour in \eqref{eq:problem}. Therefore, it remains to check that $u$ is a solution to \eqref{e:11}. Here we use Lemma \ref{l:1}. Let $A$ be any relatively compact open subset of $\Omega$. Then for all $k_n$,
$$
u_{k_n}(x)=-G_A f_{u_{k_n}}(x)+P_A u_{k_n}(x)= -\int_A G_A(x,y)f(u_{k_n}(y))\, dy +\int_{\Omega\setminus A}P_A(x,y)u_{k_n}(y)\, dy.
$$
Since $u_{k_n}$ increases to $u$ and $f$ is nondecreasing, by the monotone convergence theorem we get that
$$
u(x)=-G_A f_u(x)+P_A u(x). 
$$
By \eqref{e:3} it follows that $u$ is a solution to \eqref{e:11}. 
In order to show continuity of $u$, fix any compact $K\subset \Omega$ and choose $\eta>0$ such that $\mathrm{dist}(K, \Omega^c)> 3\eta$. Let $A$ be any $C^{2}$ open subset of $\Omega$ such that $\Omega\setminus \Omega_{2\eta}\subset A \subset \Omega\setminus \Omega_{\eta}$. For any $j\ge 1$ and $x\in A$,
\begin{eqnarray*}
0&\le & u_{k_{n+j}}(x)-u_{n_k}(x)=\big(-G_A f_{u_{k_{n+j}}}(x)+P_A u_{k_{n+j}}(x)\big)-\big(-G_A f_{u_{k_{n}}}(x)+P_A u_{k_{n}}(x)\big)\\
&=&-G_A ( f_{u_{k_{n+j}}}- f_{u_{k_{n}}})(x)+P_A ( u_{k_{n+j}}- u_{k_{n}})(x)\le P_A ( u_{k_{n+j}}- u_{k_{n}})(x).
\end{eqnarray*}
If $x\in K$ and $z\in \Omega\setminus A$, then $|x-z|\ge \eta$. Hence, by \cite[(4.5)]{BVW}, for some $c_4>0$
$$
P_A(x,z)\le \frac{c_4}{V(\delta_{A^c}(z))}, \quad x\in K, z\in \Omega\setminus A,
$$
where $\delta_{A^c}(z)=\text{dist}(z,A)$. This implies that
\begin{equation}\label{e:15}
0\le  u_{k_{n+j}}(x)-u_{n_k}(x)\le c_4\int_{\Omega\setminus  A}\frac{u_{k_{n+j}}(z)- u_{k_{n}}(z)}{V(\delta_{A^c}(z))}\, dz 
\le c_4\int_{\Omega\setminus  A}\frac{u(z)- u_{k_{n}}(z)}{V(\delta_{A^c}(z))}\, dz.
\end{equation}
Since $u\in L^1(\Omega)$ is also locally bounded on $\Omega$, and $\int_0^1 \frac{dr}{V(r)}dr<\infty$, it holds that 
\[
\int_{\Omega\setminus A}\frac{u(z)}{V(\delta_{A^c}(z))}\, dz <\infty.
\]
Therefore, by the dominated convergence theorem, the right-hand side in \eqref{e:15} goes to zero as $n\to \infty$, independently of $x\in K$. Hence, the convergence $u_{k_n}\xrightarrow{n\to\infty} u$ is uniform in $K$, implying that $u$ is continuous on $K$. 
\qed

\begin{remark}
    Note that the solution from Theorem \ref{thm:solution} satisfies
    \[
    \limsup\limits_{x\to\partial\Omega}\frac{u(x)}{\psi(V(\delta(x))}<+\infty,
    \]
    but we do not obtain the exact blow-up rate of the solution at $\partial\Omega$.  In the case of the fractional Laplacian $L=\Delta^{\frac \alpha 2}$, $f(t)=t^p$ and $\Omega=B$ a ball, the blow-up rate of the large solution is known to be of the same order as the supersolution $\overline{u}$, see \cite[Theorem 5.1]{benchrouda}. That is, the solution $u$ is comparable to $\delta_B^{-\frac{\alpha}{p-1}}$ near the boundary $\partial B$. 
\end{remark}

\section{Kato's inequality}

In the classical setting of the Laplace operator, the classical Keller-Osserman condition implies the existence of a universal upper bound on $\Omega$ for distributional solutions to the local semilinear equation, 
\begin{equation}\label{e:11_local}
\Delta u(x)=f(x,u(x)),\quad x\in\Omega,
\end{equation}
see for example \cite[Section 3.1, Theorem 4.1.2]{MV}. This universal upper bound is obtained by applying Kato's inequality and showing that all solutions are dominated by a function given in terms of the constructed supersolution $\overline{u}$. As a consequence, it follows that for every compact set $K\subset \Omega$ there exists a positive constant $C_K$ such that every locally bounded solution of \eqref{e:11_local} satisfies
\begin{equation}\label{e:uniform_bdd}
\sup_{x\in K} |u(x)|\le C_K.
\end{equation}
This uniform local boundedness is then the key ingredient in the construction of the maximal solution to \eqref{e:11_local}. Although it is still unclear whether similar consequences of the generalised Keller-Osserman condition hold in the nonlocal setting, even in the case of the fractional Laplacian, we investigate the corresponding distributional Kato's inequality and one of its direct implications in the context of semilinear equations.

\begin{prop}\label{prop:kato-distrbution}
Let $\Omega$ be a bounded $C^{1,1}$ open set, $F\in L^1_\text{loc}(\Omega)$ and $u\in L^1(\rd, (1\wedge j(|x|))dx)$ the distributional solution to the linear problem
\begin{equation}\label{e:Kato2}
\begin{array}{rcll}
		-Lu&=& F& \quad \text{in } \Omega.
	\end{array}
\end{equation}
Then for every $\xi\in C_c^\infty(\Omega)$, $\xi\ge 0$,
\begin{align}
&\int_{\R^d}|u|(-L\xi)\leq \int_{\Omega}\xi \text{sgn}(u)F.\label{e:kato3} 
\end{align}
\end{prop}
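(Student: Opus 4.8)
The plan is to establish the distributional Kato inequality \eqref{e:kato3} by a standard regularization-plus-convexity argument, adapted to the nonlocal operator $L$. First I would reduce to a pointwise statement by mollifying. Set $u_\varepsilon=\theta_\varepsilon\ast u$ with standard mollifiers $\theta_\varepsilon$; then, exactly as in the computation in the proof of Lemma \ref{l:superharmonic}, for any fixed relatively compact open $\Omega'\Subset\Omega$ and $\varepsilon$ small enough we have $-Lu_\varepsilon=F_\varepsilon:=\theta_\varepsilon\ast F$ pointwise on $\Omega'$, and $u_\varepsilon\in C^\infty$, $u_\varepsilon\to u$ in $L^1(\R^d,(1\wedge j(|x|))\,dx)$ and a.e., while $F_\varepsilon\to F$ in $L^1_{\mathrm{loc}}(\Omega)$.

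The heart of the matter is the pointwise inequality $-L|u_\varepsilon|(x)\le \mathrm{sgn}(u_\varepsilon(x))\,(-Lu_\varepsilon)(x)$ for $x\in\Omega'$. This is where convexity of $t\mapsto|t|$ enters: for a smooth function $g$ and the jump kernel representation,
\[
-Lg(x)-\mathrm{sgn}(g(x))(-Lg(x)) \ \text{vs.}\ -L|g|(x),
\]
one writes
\[
L|g|(x)-\mathrm{sgn}(g(x))Lg(x)=\int_{\R^d}\big(|g(y)|-|g(x)|-\mathrm{sgn}(g(x))(g(y)-g(x))\big)j(|y-x|)\,dy,
\]
and each integrand is $|g(y)|-\mathrm{sgn}(g(x))g(y)\ge 0$, so $L|g|(x)\ge \mathrm{sgn}(g(x))Lg(x)$, i.e. $-L|g|(x)\le \mathrm{sgn}(g(x))(-Lg(x))$. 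Since $|u_\varepsilon|$ need not be $C^2$ where $u_\varepsilon$ vanishes, I would either smooth the absolute value (replace $|t|$ by $\sqrt{t^2+\kappa^2}-\kappa$, apply the convexity inequality with the smooth convex function $\Phi_\kappa$, and let $\kappa\downarrow0$), or note that $|u_\varepsilon|\in C(\Omega')$ is locally Lipschitz so $L|u_\varepsilon|$ is still well defined via the integral; in either case the convexity inequality survives the limit. Testing against $\xi\in C_c^\infty(\Omega)$, $\xi\ge0$, with $\mathrm{supp}\,\xi\subset\Omega'$, and using self-adjointness of $L$ (so $\int |u_\varepsilon|(-L\xi)=\int \xi(-L|u_\varepsilon|)$, valid because $|u_\varepsilon|$ and $\xi$ are nice enough — this uses that $\xi$ has compact support and $|u_\varepsilon|\in L^1(\R^d,(1\wedge j(|x|))dx)$), gives
\[
\int_{\R^d}|u_\varepsilon|(-L\xi)=\int_{\Omega'}\xi(-L|u_\varepsilon|)\le\int_{\Omega'}\xi\,\mathrm{sgn}(u_\varepsilon)(-Lu_\varepsilon)=\int_{\Omega'}\xi\,\mathrm{sgn}(u_\varepsilon)F_\varepsilon.
\]

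Finally I would pass to the limit $\varepsilon\downarrow0$. The left side: $|u_\varepsilon|\to|u|$ in $L^1(\R^d,(1\wedge j(|x|))dx)$ (since $\big||u_\varepsilon|-|u|\big|\le|u_\varepsilon-u|$) and $L\xi$ is bounded with $|L\xi(x)|\lesssim 1\wedge j(|x|)$ away from $\mathrm{supp}\,\xi$ and bounded near it, so $\int|u_\varepsilon|(-L\xi)\to\int|u|(-L\xi)$. The right side: on the fixed compact $\mathrm{supp}\,\xi$, $F_\varepsilon\to F$ in $L^1$ and $\mathrm{sgn}(u_\varepsilon)$ is bounded by $1$; a clean way to handle the product of the (only a.e.-convergent) $\mathrm{sgn}(u_\varepsilon)$ with $F_\varepsilon$ is to write $\int\xi\,\mathrm{sgn}(u_\varepsilon)F_\varepsilon=\int\xi\,\mathrm{sgn}(u_\varepsilon)(F_\varepsilon-F)+\int\xi\,\mathrm{sgn}(u_\varepsilon)F$; the first term is bounded by $\|\xi\|_\infty\|F_\varepsilon-F\|_{L^1(\mathrm{supp}\,\xi)}\to0$, and for the second, since $\mathrm{sgn}(u_\varepsilon)\to\mathrm{sgn}(u)$ a.e. on $\{u\ne0\}$ and $\xi F\in L^1$, dominated convergence gives $\int\xi\,\mathrm{sgn}(u_\varepsilon)F\to\int_{\{u\ne0\}}\xi\,\mathrm{sgn}(u)F=\int\xi\,\mathrm{sgn}(u)F$ (the set $\{u=0\}$ contributing $0$ on the limit side). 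This yields \eqref{e:kato3}. The main obstacle I anticipate is the lack of smoothness of $|u_\varepsilon|$ together with the need to justify the integration-by-parts identity $\int|u_\varepsilon|(-L\xi)=\int\xi(-L|u_\varepsilon|)$ for a merely Lipschitz, not $C^2$, integrand against the singular kernel $j$; the smoothing-the-absolute-value trick (working with $\Phi_\kappa(u_\varepsilon)\in C^\infty$ and then letting $\kappa\downarrow0$ via dominated convergence, using $|\Phi_\kappa(t)|\le|t|$ and $|\Phi_\kappa'|\le1$) is the cleanest route around it and I would present the argument that way.
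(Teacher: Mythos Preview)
Your strategy is essentially the paper's: mollify $u$ to $u_\varepsilon$ so that $-Lu_\varepsilon=F_\varepsilon$ pointwise on compact subsets, replace $|\cdot|$ by a smooth convex $\Phi_\kappa$, use the pointwise convexity inequality $-L(\Phi_\kappa(u_\varepsilon))\le \Phi_\kappa'(u_\varepsilon)(-Lu_\varepsilon)$, integrate by parts against $\xi$, and pass to the limit. The paper formalises the integration-by-parts step as a separate lemma (for $C^2$ functions in the weighted $L^1$ class), which your $\Phi_\kappa$-smoothing makes applicable; this part is fine.

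There is, however, a genuine gap in your final limit. You send $\kappa\downarrow 0$ \emph{first}, arriving at
\[
\int_{\R^d}|u_\varepsilon|(-L\xi)\le\int_\Omega\xi\,\mathrm{sgn}(u_\varepsilon)\,F_\varepsilon,
\]
and only then let $\varepsilon\downarrow 0$. The claim that $\int\xi\,\mathrm{sgn}(u_\varepsilon)F\to\int\xi\,\mathrm{sgn}(u)F$ by dominated convergence is not justified: on the set $\{u=0\}$ the sequence $\mathrm{sgn}(u_\varepsilon)$ need not converge at all, so DCT does not apply, and the best you get from $|\mathrm{sgn}(u_\varepsilon)|\le 1$ is
\[
\limsup_{\varepsilon\to0}\int_\Omega\xi\,\mathrm{sgn}(u_\varepsilon)F\ \le\ \int_\Omega\xi\,\mathrm{sgn}(u)F\ +\ \int_{\{u=0\}}\xi\,|F|,
\]
which is strictly weaker than \eqref{e:kato3} unless $F=0$ a.e.\ on $\{u=0\}$. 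The fix is simply to swap the order of limits: keep the smooth convex $\Phi_\kappa$ (or a generic $C^2$ convex $\gamma$ with bounded derivative) and send $\varepsilon\downarrow 0$ first. Then $\Phi_\kappa'\in C_b(\R)$ gives $\Phi_\kappa'(u_\varepsilon)\to\Phi_\kappa'(u)$ a.e.\ and DCT applies cleanly, yielding
\[
\int_{\R^d}\Phi_\kappa(u)(-L\xi)\le\int_\Omega\xi\,\Phi_\kappa'(u)\,F.
\]
Now let $\kappa\downarrow 0$: since $\Phi_\kappa'(t)=t/\sqrt{t^2+\kappa^2}\to\mathrm{sgn}(t)$ for \emph{every} $t$ (including $t=0$), DCT gives the desired \eqref{e:kato3}. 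This is exactly how the paper organises the two limits.
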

We first prove a simple integration by parts type formula.
\begin{lemma} \label{lemma:ibp}
Let $\epsilon_0>0$ and $\xi \in C^{\infty}_c(\Omega\setminus \Omega_{\epsilon_0})$. For any $0<\epsilon<\frac{\epsilon_0}{2}$ and $v\in C^2(\Omega\setminus\Omega_{\epsilon}) \cap L^1(\rd, (1\wedge j(|x|)dx))$, it holds that 
\begin{align} \label{dist-kato-IBP}
    \int_{\rd} v L\xi = \int_{\Omega\setminus\Omega_{\epsilon_0}} \xi Lv . 
\end{align}
\end{lemma}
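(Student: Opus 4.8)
The plan is to prove the identity \eqref{dist-kato-IBP} by expanding both sides using the singular-integral representation \eqref{eq:L1} of $L$ and then applying Fubini's theorem, the only subtlety being the justification of the interchange of integrals given the singularity of $j$ at the origin and the mere local regularity of $v$. First I would record the relevant supports: $\xi$ vanishes on $\Omega_{\epsilon_0}$ (and off a compact subset of $\Omega$), while $v$ is only assumed to be $C^2$ on $\Omega\setminus\Omega_\epsilon$, a neighbourhood of $\mathrm{supp}\,\xi$ since $\epsilon<\epsilon_0/2$. On this neighbourhood $L v$ can be computed pointwise via the principal-value integral \eqref{eq:L1} because $v\in C^2$ there and $v\in L^1(\rd,(1\wedge j(|x|))dx)$, which controls the tail; moreover $\xi\in C^2_c$ so $L\xi$ is a bounded continuous function on all of $\rd$ (again using \eqref{eq:L1}). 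Thus both sides of \eqref{dist-kato-IBP} are genuine absolutely convergent integrals.

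The core computation is the symmetrisation
\[
\int_{\rd} v(x)\,L\xi(x)\,dx
=\int_{\rd}\int_{\rd} v(x)\,\bigl(\xi(y)-\xi(x)\bigr)\,j(|y-x|)\,dy\,dx,
\]
and likewise for $\int \xi Lv$; subtracting, the claimed identity \eqref{dist-kato-IBP} is equivalent to
\[
\int_{\rd}\int_{\rd}\bigl(v(x)\xi(y)-v(x)\xi(x)-\xi(x)v(y)+\xi(x)v(x)\bigr)\,j(|y-x|)\,dy\,dx=0,
\]
i.e. to the antisymmetry of $(x,y)\mapsto\bigl(v(x)\xi(y)-\xi(x)v(y)\bigr)j(|y-x|)$ under swapping $x\leftrightarrow y$ (using that $j$ depends only on $|y-x|$). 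So once Fubini is legitimate, the identity is immediate by a change of variables. To make Fubini rigorous I would split the double integral over $\{|y-x|<\epsilon/2\}$ and $\{|y-x|\ge\epsilon/2\}$. On the far region, $j(|y-x|)\le j(\epsilon/2)<\infty$ away from the diagonal but one still needs integrability at spatial infinity; here one uses $\xi$ compactly supported together with $v\in L^1(\rd,(1\wedge j(|x|))dx)$ and $|y-x|$ bounded below, so $j(|y-x|)\lesssim 1\wedge j(|y-x|)$ up to constants on that range, giving an absolutely convergent integral. On the near-diagonal region, $x\in\mathrm{supp}\,\xi$ forces $y$ into $\Omega\setminus\Omega_\epsilon$ where $v$ is $C^2$, so a second-order Taylor expansion of both $v$ and $\xi$ around the diagonal bounds the integrand by $C|y-x|^2 j(|y-x|)$, which is integrable near $0$ because \eqref{eq:j} gives $j(r)\asymp \phi(r^{-2})r^{-d}$ and \eqref{eq:H} yields $\int_0 r^{2}\phi(r^{-2})r^{-1}\,dr<\infty$ (this is exactly the computation already used for $J_2^1$ in the proof of Proposition \ref{prop:supersolution}). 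These two bounds together give absolute integrability of the full double integral, legitimising Fubini and the principal-value manipulations.

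I expect the main obstacle to be purely bookkeeping: carefully handling the principal values so that the P.V. in the definition of $Lv(x)$ and of $L\xi(x)$ can be moved inside the $dx$-integral and recombined, rather than any deep estimate. Concretely, one writes the P.V. as $\lim_{\rho\downarrow0}\int_{|y-x|>\rho}$, applies the (now justified) Fubini on the truncated integrals $\{|y-x|>\rho\}$ where there is no singularity, performs the antisymmetry cancellation there, and then passes to the limit $\rho\downarrow0$ using the $C|y-x|^2 j(|y-x|)$ domination on the annulus $\{\rho<|y-x|<\epsilon/2\}$ to show the leftover terms vanish. A minor point to flag is the replacement of the region $\rd$ by $\Omega\setminus\Omega_{\epsilon_0}$ on the right-hand side of \eqref{dist-kato-IBP}: this is simply because $\xi$ (and hence $L\xi$ weighted against $v$ produces no contribution outside, once symmetrised — more precisely, $Lv$ is integrated against $\xi$ which is supported in $\Omega\setminus\Omega_{\epsilon_0}$), so the right-hand integral is automatically over that set and no regularity of $v$ outside $\Omega\setminus\Omega_\epsilon$ is ever needed.
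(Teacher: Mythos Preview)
Your ``concrete'' implementation via truncation is correct and is exactly how the paper proceeds: one sets $L_\delta\xi(x)=\int_{|y|>\delta}(\xi(x+y)-\xi(x))j(|y|)\,dy$, checks $\int_{\rd} v\,L_\delta\xi=\int_\Omega\xi\,L_\delta v$ by the (now harmless) symmetrisation, and then lets $\delta\downarrow 0$ on each side separately by dominated convergence, using $|L_\delta\xi(x)|\lesssim 1\wedge j(|x|)$ for the left-hand side and, for $x\in\mathrm{supp}\,\xi$, the $C^2$ bound
\[
|L_\delta v(x)|\le \|D^2 v\|_{L^\infty(\Omega\setminus\Omega_{\epsilon_0/2})}\int_{|y|<\epsilon}|y|^2 j(|y|)\,dy+|L_\epsilon v(x)|
\]
for the right-hand side.

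However, your first framing --- proving \emph{absolute} integrability of the full double integral and invoking Fubini directly --- does not work as stated. The combined integrand on the near-diagonal region expands as
\[
v(x)\bigl(\xi(y)-\xi(x)\bigr)-\xi(x)\bigl(v(y)-v(x)\bigr)
=\bigl[v(x)\nabla\xi(x)-\xi(x)\nabla v(x)\bigr]\cdot(y-x)+O(|y-x|^2),
\]
so it carries a genuine first-order part; a second-order Taylor expansion of both $v$ and $\xi$ does \emph{not} produce an $O(|y-x|^2)$ bound. In absolute value the near-diagonal piece is therefore only $O(|y-x|)\,j(|y-x|)$, and by \eqref{eq:j} its integral over $\{|y-x|<\epsilon/2\}$ is $\asymp\int_0^{\epsilon/2}\phi(r^{-2})\,dr$, which diverges whenever $\delta_1\ge\tfrac12$ in \eqref{eq:H} (for instance the fractional Laplacian with $\alpha\ge 1$). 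The odd first-order term does vanish in the principal-value sense, but that is precisely why one has to retain the truncation and pass to the limit rather than appeal to absolute convergence; your final paragraph already does this correctly, and that is the argument to keep.
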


\begin{proof}
For $\delta>0$ define the approximation operator 
\begin{align*}
    L_{\delta}\xi (x) = \int_{|y|>\delta} \big(\xi(x+y) -\xi(x)\big) j(|y|) dy, 
\end{align*}
By the symmetry of the operator $L_{\delta}$ we have 
\begin{align}\label{dist-kato-IBP-approx}
    \int_{\rd} v L_{\delta}\xi = \frac 1 2\int_{\rd}\int_{\rd}(v(z)-v(x))(\xi(x)-\xi(z))j(|z-x|)1_{\{|z-x|>\delta\}}dz dx=\int_{\Omega} \xi L_{\delta} v . 
\end{align}
Note that $\lim_{\delta\rightarrow 0} \int_{\rd} v L_{\delta}\xi = \int_{\rd} v L\xi$ by the dominated convergence theorem. Indeed, by assumption $v\in L^1(\rd, (1\wedge j(|x|))dx)$, and since $\xi \in C^{\infty}_c(\Omega\setminus\Omega_{\epsilon_0})$ we have that $|L_{\delta} \xi(x)|\leq C((1\wedge j(|x|))dx)$ for some constant $C$ independent of $\delta$. 

Furthermore, as $v\in C^2(\Omega\setminus\Omega_{\epsilon})$ we have $\|D^2 v\|_{L^{\infty}(\Omega\setminus\Omega_{\epsilon_0/2})}<+\infty$. For any $x\in \Omega\setminus\Omega_{\epsilon_0}$ we get
\begin{align*}
     |L_{\delta} v(x)|&\leq \int_{\delta< |y|<\epsilon} \big|v(x+y) -v(x)-\nabla v(x)\cdot y\big| j(|y|) dy \, + \big|L_{\epsilon} v(x) \big|\\ 
    & \leq \|D^2 v\|_{L^{\infty}(\Omega\setminus\Omega_{\epsilon_0/2})} \int_{|y|<\epsilon} |y|^2 j(|y|) dy + |L_{\epsilon} v(x)| := h_\epsilon(x).  
\end{align*}
Since $h_\epsilon \in L^1_{loc}(\Omega_\epsilon)$, by the dominated convergence theorem we have $\lim_{\delta\rightarrow 0} \int_{\Omega} \xi L_{\delta} v = \int_{\Omega} \xi L v$, so the result follows by taking limit as $\delta \rightarrow 0$ in \eqref{dist-kato-IBP-approx}.
\end{proof}

\begin{proof}[Proof of Proposition \ref{prop:kato-distrbution}]
 We prove the result in several steps.  

\textit{Step1:} 
Let $\{\rho_\epsilon\}_{\epsilon>0}$ be a family of mollifiers such that $\int_{\rd} \rho_\epsilon =1$ and define $u_{\epsilon} = u * \rho_\epsilon$ and $F_{\epsilon}:= F * \rho_{\epsilon}$. Fix $\epsilon_0>0$. First we show that $u_{\epsilon}$ is a distributional (and pointwise) solution of 
\begin{align}\label{proof:kato-dist-eqn-reg}
    -L u_{\epsilon} = F_{\epsilon} \quad \text{in} \quad \Omega\setminus\Omega_{\epsilon_0}. 
\end{align}
for all $\epsilon< \frac{\epsilon_0}{2}$. Note that for every $\xi\in C_c^\infty(\Omega\setminus\Omega_{\epsilon_0})$, $\epsilon< \frac{\epsilon_0}{2}$ and $y\in B_\epsilon(0)$ we have that $\overline{\xi}_y:= \xi (\cdot + y) \in C^{\infty}_c(\Omega)$. Since $u$ is a distributional solution to \eqref{e:Kato2} we get that
\begin{align*}
    -\int_{\rd} u_{\epsilon} \, L \xi& = -\int_{\rd} \int_{\rd} \rho_{\epsilon}(y) u(x-y) L\xi(x) \, dx \, dy = -\int_{\rd} \rho_{\epsilon}(y)  \int_{\rd} u(z) L\overline{\xi}_y(z) dz \, dy =\nonumber\\
    &=\int_{\rd} \rho_\epsilon (y) \int_{\Omega} F(z) \overline{\xi}_y(z) dz dy
    =\int_{\Omega\setminus\Omega_{\epsilon_0}} \xi(x) \int_{\rd} F(x-y) \rho_{\epsilon}(y) dy \, dx\nonumber\\
    &= \int_{\Omega\setminus\Omega_{\epsilon_0}} F_{\epsilon} \, \xi.
\end{align*}
\textit{Step2:} Let $0\leq \xi \in C^{\infty}_c(\Omega)$. Then there exists $\epsilon_0>0$ such that $\text{supp}(\xi) \subset \Omega\setminus\Omega_{\epsilon_0}$. 
By applying Taylor's theorem to a convex function $\gamma \in C^2 (\R)$, for $x\in \rd$ and $\epsilon\le \epsilon_0$ we get that
\begin{align}\label{proof:kato-dist-libn}
{ - L(\gamma(u_\epsilon))(x)} &  { =   - \gamma'(u_\epsilon)(x) L u_\epsilon(x) -  \text{P.V. }\int_{\R^d} \frac{\gamma''(z_{x,y,u_\epsilon})}{2}(u_\epsilon(x)-u_\epsilon(y))^2 j(|x-y|) dy }\notag \\  
   & \leq - \gamma'(u_\epsilon)(x) L u_\epsilon(x).
\end{align}
By Lemma \ref{lemma:ibp},  \eqref{proof:kato-dist-libn} and \eqref{proof:kato-dist-eqn-reg}, we get that  
\begin{align}\label{proof:kato-dist-eps-ineq}
    \int_{\rd}  \gamma(u_\epsilon) \big(-L\xi\big)= \int_{\Omega}  -L(\gamma(u_\epsilon))\xi \leq \int_{\Omega}  \gamma'(u_\epsilon) \big(-L u_\epsilon\big) \xi = \int_{\Omega} \gamma'(u_\epsilon)F_\epsilon \, \xi. 
\end{align}

\textit{Step 3:} As $u \in L^1(\rd, 1\wedge j(|\cdot|))$ one can easily show that $u_\epsilon \xrightarrow{\epsilon\downarrow 0} u$ in $L^1(\rd, 1\wedge j(|\cdot|))$, see \cite[Lemma 2.9]{GKL}. Therefore, for $\gamma \in C^2(\R)$ such that $\gamma'$ is bounded it follows that 
\begin{align*}
    \gamma(u_\epsilon) \xrightarrow{\epsilon\downarrow 0} \gamma(u) \quad \text{in} \quad  L^1(\rd, 1\wedge j(|\cdot|)).  
\end{align*}
Since for $\xi \in C^{\infty}_c(\Omega)$ we have that $|L\xi(x)| \lesssim 1\wedge j(|x|) $, the previous convergence implies 
\begin{align}\label{eq:lhs}
    \lim_{\epsilon\rightarrow 0} \int_{\rd} \gamma(u_\epsilon) (-L\xi) = \int_{\rd} \gamma(u) (-L\xi).
\end{align}
On the other hand, note that similarly as in \eqref{proof:kato-dist-eqn-reg},
\begin{align}\label{eq:rhs}
    \int_{\Omega} \gamma'(u_\epsilon) F_\epsilon \, \xi = \int_{\Omega} \gamma'(u_\epsilon)(x)  \, \xi(x) \int_{\rd} F(y) \rho_\epsilon(x-y) dy \, dx = \int_{\Omega} F\,  \big( \rho_\epsilon * (\gamma'(u_\epsilon)\xi)\big).
\end{align}
Since $u_\epsilon \rightarrow u$ in $L^1(\rd, 1\wedge j(|\cdot|))$ it follows that (on a subsequence) $u_\epsilon \longrightarrow u$ a.e. Since $\gamma'\in C_b(\R)$, by dominated convergence theorem we get that 
\begin{align}\label{proof:kato-dist5}
    \int_{\Omega} \big| (\gamma'(u_\epsilon) - \gamma'(u))\big|  \xi   \xrightarrow{\epsilon\downarrow0} 0. 
\end{align}
We apply the following decomposition 
\begin{align}\label{proof:kato-dist6}
    \Big|\int_{\Omega}F\,  \big( \rho_\epsilon * (\gamma'(u_\epsilon)\xi)\big) - \int_{\Omega} F \gamma'(u)\xi \Big| & \leq \Big| \int_{\Omega}F\,  \big( \rho_\epsilon * (\gamma'(u_\epsilon)\xi)\big) - \int_{\Omega} F \big( \rho_\epsilon * (\gamma'(u)\xi)\big)\Big|  \notag \\
    & \qquad + \Big|  \int_{\Omega} F \big( \rho_\epsilon * (\gamma'(u)\xi)\big) - \int_{\Omega} F \gamma'(u)\xi \Big|  \notag \\
    & := A_\epsilon + B_\epsilon. 
\end{align}
Since $\xi \in C^{\infty}_c (\Omega\setminus\Omega_{\epsilon_0})$ and $\rho_\epsilon \in C^{\infty}_c(B_\epsilon(0))$, we get that $\text{supp}\Big(\rho_\epsilon * (\gamma'(u_\epsilon)\xi) - \rho_\epsilon * (\gamma'(u)\xi) \Big) \subset \text{supp}(\rho_\epsilon) + \Omega\setminus\Omega_{\epsilon_0} \subset K_{\xi}$ for some compact set $K_{\xi}\subset \Omega$ independent of $\epsilon>0$. Then by $\|\rho_\epsilon\|_{L^{\infty}(\rd)}\leq 1$ and \eqref{proof:kato-dist5} we have that
\begin{align*}
    A_\epsilon & = \Big| \int_{K_{\xi}} F(x)\,   \rho_\epsilon * \big((\gamma'(u_\epsilon)-\gamma'(u))\xi\big)(x) \, dx \Big|\\
    & = \Big|    \int_{K_{\xi}}  F(x)  \Big( \int_{B_\epsilon(x)} \rho_\epsilon(x-y) (\gamma'(u_\epsilon) - \gamma'(u))(y) \xi(y) dy \Big) dx \Big| \\
    & \leq \|F\|_{L^1(K_{\xi})} \int_{\Omega} \big| (\gamma'(u_\epsilon) - \gamma'(u))  \xi \big| \,  \xrightarrow{\epsilon\rightarrow 0} 0. 
\end{align*}
Since the family of mollifiers is uniformly bounded, by boundedness of $\xi$ and $\gamma'$, for $x\in K_\xi$  it follows that
$$|F(x) \rho_\epsilon* (\gamma'(u)\xi)(x)|\leq \|\gamma'(u)\xi\|_{L^{\infty}(\Omega)}|F(x)| . $$  
Since $F\in L^1(K_\xi)$ and $\rho_\epsilon* (\gamma'(u)\xi) \rightarrow (\gamma'(u)\xi)$, by the dominated convergence theorem we get that $\lim\limits_{\epsilon \rightarrow 0} B_\epsilon = 0$. Therefore, by taking the limit in the inequality \eqref{proof:kato-dist-eps-ineq}, and applying \eqref{eq:lhs}, \eqref{eq:rhs} and \eqref{proof:kato-dist6} we get that  
\begin{align}\label{proof:kato-dist-reg-ineq}
    \int_{\rd}  \gamma(u) \big(-L\xi\big) \leq  \int_{\Omega} \gamma'(u)F \, \xi. 
\end{align}

\textit{Step 4:} Let $(\gamma_{n})_{n}$ be an increasing sequence of nonnegative $C^2$ convex functions such that $ \gamma_n(x) \rightarrow |x|$, $\gamma_n'(x) \rightarrow \text{sgn}(x)$ as $n \rightarrow \infty$ and $\|\gamma_n'\|_{L^{\infty}(\R)}<K$ for some constant $K>0$ independent of $n$. By \eqref{proof:kato-dist-reg-ineq},
\begin{align}\label{proof:kato-dist-reg-ineq-approx}
    \int_{\rd}  \gamma_n(u) \big(-L\xi\big) \leq  \int_{\Omega} \gamma_n'(u)F \, \xi. 
\end{align}
Since the sequence $(\gamma_n)_n$ is increasing, by splitting $L\xi$ into the positive and negative part, we can apply the monotone convergence theorem to find that 
\begin{align*}
    & \lim_{n\to\infty} \int_{\rd}  \gamma_n(u) \big(-L\xi\big) = \lim_{n\to\infty} \Big( \int_{\{L\xi>0\}} + \int_{\{L\xi\leq 0\}}\Big) \gamma_n(u)(x) \big(-L\xi\big)(x) \, dx \\
    & = \Big( \int_{\{L\xi>0\}} + \int_{\{L\xi\leq 0\}}\Big)  |u(x)| \big(-L\xi\big)(x) \, dx = \int_{\rd} |u(x)| \big(-L\xi\big)(x) \, dx . 
\end{align*}
On the other hand, as $|\gamma_n'|\leq K$ and $F\in L^1_{loc}(\Omega)$, the dominated convergence theorem implies that 
\begin{align*}
    \lim_{n\to\infty} \int_{\Omega} \gamma_n'(u)F \, \xi = \int_{\Omega} \text{sgn}(u)F \, \xi.
\end{align*}
The proof is finished by taking the limit as $n\to\infty$  in \eqref{proof:kato-dist-reg-ineq-approx}.

\end{proof}

A straightforward consequence of Kato's inequality is the fact that given two distributional solutions $u$ and $v$ to the semilinear problem \eqref{e:11}, their maximum
\[
w:=\max\{u,v\}=\frac{u+v+|u-v|}{2}
\]
is a subsolution to \eqref{e:11}.
Since $f$ is increasing, by linearity of the operator $L$ and \eqref{e:Kato2} we get the following inequality in the distributional sense
\begin{align*}
-Lw&=-\frac{f(u)+f(v)}{2}-\frac 1 2 L(|u-v|)\\
&\le -\frac{f(u)+f(v)}{2}-\frac{\text{sgn}(u-v)(f(u)-f(v))}2\\
&= -\frac{f(u)+f(v)}{2}-\frac{|f(u)-f(v)|}2\\
&= -\max(f(u),f(v))=-f(w),  
\end{align*}
that is, $w$ is a subsolution to \eqref{e:11}. Analogously, $\min\{u,v\}$ is a supersolution to \eqref{e:11}.

\vspace{1cm}

\noindent {\bf Acknowledgments.} The authors would like to thank Dr.~Ivan Biočić for thoughtful reading and valuable comments on the manuscript.

	\bigskip
	
	\noindent{\bf Indranil Chowdhury}
	
	\noindent Indian Institute of Technology - Kanpur (IIT Kanpur), Kanpur, India,
	
	\noindent Email: \texttt{indranil@iitk.ac.in}

    \bigskip
    
  \noindent{\bf Zoran Vondra\v{c}ek} 

  \noindent Dr. Franjo Tu\dj man Defense and Security University, Zagreb, Croatia, and

  \noindent Faculty of Science, Department of Mathematics, University of Zagreb, Zagreb, Croatia,

  \noindent Email: \texttt{vondra@math.hr}

	\bigskip
	
	\noindent{\bf Vanja Wagner}
	
	\noindent Faculty of Science, Department of Mathematics, University of Zagreb, Zagreb, Croatia,
	
	\noindent Email: \texttt{wagner@math.hr}%
\end{document}